\newcommand{\E}{\mathbb{E}}
\newcommand{\R}{\mathbb{R}}
\renewcommand{\epsilon}{\varepsilon}
\DeclareMathOperator{\Span}{span}
\newtheorem{theorem}{Theorem}
\newtheorem{lemma}[theorem]{Lemma}
\newtheorem*{unnumtheorem}{Theorem}
\newtheorem{remark}{Remark}
\newtheorem{corollary}[theorem]{Corollary}
\newtheorem{claim}[theorem]{Claim}
\theoremstyle{definition}
\title{On the complexity of random polytopes} 
\author{Andrew Newman\thanks{Technische Universit\"{a}t Berlin}}
\date{\today}
\begin{document}
\maketitle
\abstract
There are (at least) two reasons to study random polytopes. The first is to understand the combinatorics and geometry of random polytopes especially as compared to other classes of polytopes, and the second is to analyze average-case complexity for algorithms which take polytopal data as input. However, establishing results in either of these directions often requires quite technical methods. Here we seek to give an elementary introduction to random polytopes avoiding these technicalities. In particular we explore the general paradigm that polytopes obtained from the convex hull of random points on a sphere have low complexity.

\section{Introduction}
By now, random polytopes obtained by sampling points uniformly from a fixed convex body $K$ have been extensively studied. In one research direction they have been studied from the point of view of understanding the expected values of particular functionals such as volumes and $f$-vectors, see for example \cite{BFV, Barany89, BMT, Raynaud, ReitznerBoundary, SW20032, SW2003}. On the other hand, Borgwardt, see \cite{Borgwardt, GiftWrapping, BeneathBeyond}, has studied random polytopes in the situation of establishing average-case analysis of convex-hull and linear-programming algorithms. In both situations, combinatorial complexity of the polytopes themselves and complexity of algorithms on random polytopes, however, one finds that random polytopes are much nicer than general polytopes can be. Indeed, a polytope on $m$ vertices in $n$ dimensions can have as many as $m^{\lfloor n/2 \rfloor}$ faces. These polytopes arise as the cyclic polytopes, see for example \cite{Ziegler}. Polytopes with high combinatorial complexity naturally increase the algorithmic complexity of, for example, computing the facet description from the vertices. On the other hand random polytopes, as we will see, do not have many faces in terms of their vertices, and this is key to analysis in \cite{Borgwardt, GiftWrapping, BeneathBeyond}.\\

Now, full analysis of random polytopes can be quite technical, especially in fairly general settings. The technicalities often involve evaluating rather complicated integrals. Of course these types of integrals are unavoidable if one wishes to establish the precise answers in general. Nevertheless, some of the intuition can be lost in the process of these necessary technicalities. Here we try and preserve the intuition and present proofs about the complexity of random polytopes for an audience familiar with probability and polytopes, but likely unfamiliar with random polytopes. Unsurprisingly, our results are not as good as the known results, but we show that in the asymptotic case the more geometric approach presented here really doesn't lose too much on the true complexity given in the existing literature. \\

The model discussed here will be denoted as $P(n, m)$ and will be the probability space of polytopes in $\R^n$ where $P$ is sampled from $P(n, m)$ by taking the convex hull of $m$ points chosen uniformly at random from the boundary of the sphere in $\R^n$. The probability space is at the intersection of the work of Borgwardt in \cite{Borgwardt, GiftWrapping, BeneathBeyond}, who considered the family of all rotationally-symmetric probability distributions, and \cite{BFV, Barany89, BMT, Raynaud, SW20032, SW2003} who consider sampling points uniformly from a fixed convex-body or boundary of a fixed convex body. Here we discuss three problems about $P \sim P(n, m)$. We first consider the purely geometric problem of determining the expected complexity of $P \sim P(n, m)$ (as in \cite{BMT}).  Next, we turn our attention to the related problem of the simplex method applied to $P$. This problem is discussed extensively in Borgwardt's book \cite{Borgwardt}, and is important in an effort to understand why the simplex method works well in practice even though it is known due to \cite{KleeMinty} to have exponential complexity in the worse case. Finally we discuss convex hull algorithms on $P$, that is the problem of determining the facets of $P$ from its vertices. We conclude with an appendix showing computations for some relevant geometry quantities in the interest of keeping this paper self-contained. 

\section{Bounding the complexity of a random polytope}
Key to much of the work on average case analysis is that rather than directly bounding the complexity of an algorithm applied to random polytopes, one can bound the complexity of the input to the algorithm, that is the random polytope itself. Here we take as an example the problem of determining the expected number of facets of a random polytope determined by sampling $m$ points uniformly from the unit sphere in $\R^n$. This is known due to \cite{BMT} who gives a precise formula in terms of $n$ and $m$. Indeed \cite{BMT} gives following asymptotic result about the number of facets of $P \sim P(n, m)$.

\begin{unnumtheorem}[\cite{BMT}]
Fix $n \geq 2$, $f_{n - 1}(P)$ for $P \sim P(n, m)$ satisfies
$$\lim_{m \rightarrow \infty} \frac{\E(f_{n - 1}(P))}{m} = \frac{2}{n} \gamma_{(n - 1)^2} \gamma_{n - 1}^{-(n - 1)}$$
with $\left\{\gamma_k\right\}_{k = 0}^{\infty}$ given by the recurrence $\gamma_0 = \frac{1}{2}$ and $\gamma_{k + 1} = \frac{1}{2 \pi (k + 1) \gamma_k}$. 
\end{unnumtheorem}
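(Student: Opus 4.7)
The plan is to compute $\E[f_{n-1}(P)]$ via linearity of expectation, reduce it to a one-dimensional integral by a Blaschke-Petkantschin-style change of variables, and then extract the $m \to \infty$ limit by Laplace's method. By exchangeability every $n$-subset is equally likely to span a facet, so $\E[f_{n-1}(P)] = \binom{m}{n}\,\Pr[X_1,\dots,X_n \text{ span a facet}]$. Almost surely these $n$ points determine an affine hyperplane $H$, and they span a facet iff the remaining $m-n$ points lie strictly on one side of $H$; writing $\alpha(H) \in [0,1]$ for the normalized spherical measure of one of the two caps $H$ cuts from $S^{n-1}$, this conditional probability equals $\alpha^{m-n} + (1-\alpha)^{m-n}$, so
$$\E[f_{n-1}(P)] \;=\; \binom{m}{n}\,\E\!\left[\alpha(H)^{m-n} + (1-\alpha(H))^{m-n}\right].$$

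To compute the outer expectation, I reparametrize $(X_1,\dots,X_n) \in (S^{n-1})^n$ as the hyperplane $H$ --- encoded by its signed distance $t\in[-1,1]$ to the origin and unit normal $\nu \in S^{n-1}$ --- together with the positions of the $X_i$ on the $(n-2)$-sphere $H \cap S^{n-1}$ of radius $\sqrt{1-t^2}$. The Jacobian contributes the $(n-1)$-volume of $\conv\{X_1,\dots,X_n\}$ (to an appropriate power) times a power of $\sqrt{1-t^2}$ from the $n$ cap-slice factors. Rotational symmetry disposes of $\nu$, and integrating the resulting simplex-volume moment over the small sphere has a classical closed form as a ratio of $\Gamma$-functions. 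This collapses the outer expectation to a single integral
$$\E[f_{n-1}(P)] \;=\; C_n \binom{m}{n}\int_{-1}^{1} (1-t^2)^{a_n}\bigl(\alpha(t)^{m-n} + (1-\alpha(t))^{m-n}\bigr) \, dt,$$
with $\alpha(t)$ the usual cap-area function and $C_n$, $a_n$ explicit dimensional constants.

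For the asymptotics, the integrand is exponentially small outside two narrow windows near $t = \pm 1$, which by the symmetry $t \mapsto -t$ contribute equally. Near $t = 1$ one has $\alpha(t) \sim \tilde c_n(1-t)^{(n-1)/2}$, and the substitution $1 - t = s\,m^{-2/(n-1)}$ localizes the integral on bounded $s$ with the integrand converging pointwise to a constant multiple of $s^{a_n}\exp(-\tilde c_n s^{(n-1)/2})$. A further substitution converts this to a $\Gamma$-integral; the powers of $m$ arising from $dt$, from $(1-t^2)^{a_n}$, and from $\binom{m}{n}/m \sim m^{n-1}/n!$ combine to yield a finite limit for $\E[f_{n-1}(P)]/m$ expressible as a product of $\Gamma$-values and surface-area constants.

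The main obstacle is the bookkeeping in the final algebraic step: one must verify that the limiting constant, assembled from the Jacobian, the simplex-volume moment, the iterated surface areas $|S^{k-1}|$, and the Laplace $\Gamma$-integral, collapses to precisely $\tfrac{2}{n}\gamma_{(n-1)^2}\gamma_{n-1}^{-(n-1)}$. The recurrence $\gamma_{k+1} = (2\pi(k+1)\gamma_k)^{-1}$ encodes the successive ratios of unit sphere surface areas, so the accumulated $\Gamma$-quotients should telescope into the claimed $\gamma$-product; the exponent $(n-1)^2$ and the $-(n-1)$ power are suggestive of $n-1$ points in $H$ each contributing $n-1$ coordinates' worth of $\gamma$-factors, but matching this up explicitly is a careful calculation rather than a conceptual obstruction.
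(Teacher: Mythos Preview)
The paper does not prove this theorem. It is quoted verbatim as a result of \cite{BMT} and serves only as context; immediately after stating it the paper says ``The result of \cite{BMT} is obtained by setting up and evaluating the right integrals,'' and then announces that it will instead prove the weaker Theorem~\ref{spherecomplexity} (namely $\E[f_{n-1}(P)] \le C_n m(\log m)^{n-1}$) by an elementary geometric argument that avoids evaluating the integral altogether. Section~\ref{highlevel} sets up the integral you describe and then explicitly declines to compute it.

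Your proposal is a correct high-level outline of the \cite{BMT}/Borgwardt route: exchangeability plus linearity of expectation, a Blaschke--Petkantschin change of variables to reduce to a one-dimensional integral in the signed distance $t$, Laplace-type localization near $t=\pm 1$, and a final $\Gamma$-function bookkeeping step. That is indeed how one obtains the exact constant $\tfrac{2}{n}\gamma_{(n-1)^2}\gamma_{n-1}^{-(n-1)}$, and you have correctly identified the only genuinely delicate part as the algebraic collapse of the constants. But this is precisely the technical calculation the present paper is written to \emph{avoid}: its point is that a two-step argument (Hausdorff distance to $S^{n-1}$ is small $\Rightarrow$ all facets are small $\Rightarrow$ few candidate $n$-tuples near any fixed vertex) already gives the right power of $m$ with only crude bounds on cap areas. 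So there is no ``paper's own proof'' of this statement to compare against; what you have sketched is the external proof the paper cites and deliberately bypasses.
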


The first few values of $F_n := \frac{2}{n} \gamma_{(n - 1)^2} \gamma_{n - 1}^{-(n - 1)}$ are $F_2 = 1, F_3 = 2, F_4 = \frac{24 \pi^2}{35}, F_5 =  \frac{286}{9}, F_6 = \frac{1{,}296{,}000 \pi^4}{676{,}039}$, and in the limit $F_n$ grows exponentially in $n$. 
The result of \cite{BMT} is obtained by setting up and evaluating the right integrals. In section \ref{highlevel} we overview the integral one would set up, however we prove the following result without having to evaluate such an integral. It is of course a weaker result than the result of \cite{BMT}, but not so much weaker and the proof comes more directly from the geometry of the sphere.

\begin{theorem}\label{spherecomplexity}
Fix $n \geq 2$, then the expected number of facets of $P \sim P(n, m)$ as $m$ tends to infinity is at most $C_n (\log m)^{n - 1} m$ where $C_n$ is a constant depending only on $n$. 
\end{theorem}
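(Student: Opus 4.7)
My plan is to exploit the following basic correspondence: a subset $F = \{X_{i_1}, \ldots, X_{i_n}\}$ is a facet of $P$ if and only if the open spherical cap $C(F) \subset S^{n-1}$ bounded by the hyperplane $\text{aff}(F)$ on the outer side contains no other sample point. Large empty caps are unlikely, so every facet's cap must be small, which forces the $n$ vertices of a facet to cluster geometrically. Counting such clusters yields the bound.

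\textbf{Step 1 (Empty-cap event).} Set $t_m := (2n+1)(\log m)/m$. Using the asymptotic $\mu(\{\text{cap of angular radius }\theta\}) = c_n \theta^{n-1}(1+o(1))$ for small $\theta$, I would place caps of measure $t_m$ at the points of a $\delta$-net on $S^{n-1}$ for an appropriate $\delta = \Theta(t_m^{1/(n-1)})$, producing a family $\mathcal{F}$ with $|\mathcal{F}| = O(1/t_m) = O(m/\log m)$ and the explicit covering property that every cap of measure $\geq 2t_m$ contains some $F \in \mathcal{F}$. A union bound then gives
$$\Pr[\exists F \in \mathcal{F} \text{ with } F \cap \{X_j\} = \emptyset] \leq |\mathcal{F}|(1-t_m)^m = O\!\left(\tfrac{m}{\log m}\right) m^{-(2n+1)} = o(m^{-n}).$$
Let $E$ be the event that every cap of measure $\geq 2t_m$ contains at least one sample point. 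On $E$, for every facet $F$ of $P$ one has $\mu(C(F)) < 2t_m$, so $C(F)$ has angular radius at most $r_m := c'_n (\log m / m)^{1/(n-1)}$, and since the $n$ vertices of $F$ lie on $\partial C(F)$ they are all within angular distance $r_m$ of the apex of $C(F)$, hence pairwise within angular distance $2r_m$.

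\textbf{Step 2 (Counting the clustered $n$-tuples).} For each $i$, let $N_i = \{X_j : j \neq i,\ \angle(X_i, X_j) \leq 2r_m\}$. On $E$, every facet of $P$ containing $X_i$ has its remaining $n-1$ vertices in $N_i$, and since each facet has exactly $n$ vertices,
$$n \cdot f_{n-1}(P) \mathbf{1}_E \leq \sum_{i=1}^{m} \binom{|N_i|}{n-1}.$$
Conditioning on $X_i$, the random variable $|N_i|$ is $\text{Binomial}(m-1, q)$ with $q = O(r_m^{n-1}) = O((\log m)/m)$, and the factorial moment identity $\E\binom{Y}{k} = \binom{m-1}{k} q^k$ gives $\E\binom{|N_i|}{n-1} = O((\log m)^{n-1})$. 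Summing yields $\E[f_{n-1}(P)\mathbf{1}_E] = O(m(\log m)^{n-1})$. On the bad event, the trivial bound $f_{n-1}(P) \leq \binom{m}{n} \leq m^n$ together with $\Pr[\overline E] = o(m^{-n})$ gives the negligible contribution $\E[f_{n-1}(P)\mathbf{1}_{\overline E}] = o(1)$, completing the proof.

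The main technical obstacle is the covering lemma in Step 1: one must produce a family $\mathcal{F}$ of size $O(m/\log m)$ such that every cap of measure $\geq 2t_m$ contains some member of $\mathcal{F}$, which requires quantitative control on the relation between measure and angular radius at the small scale $t_m$. Everything else is a clean application of linearity of expectation and the standard binomial factorial moment, and the exponent $(\log m)^{n-1}$ in the bound emerges naturally as the $(n-1)$-st factorial moment of a $\text{Binomial}(m, \Theta((\log m)/m))$ random variable, reflecting that facets form near $n$-clusters of points whose typical size is $\Theta(\log m)$.
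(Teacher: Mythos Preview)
Your proposal is correct and follows essentially the same two-step strategy as the paper: first argue that with high probability every facet cuts off a spherical cap of measure $O((\log m)/m)$, then upper-bound the number of facets by the expected number of $n$-tuples clustered at that scale, which is $O(m(\log m)^{n-1})$ via a binomial (factorial) moment computation identical to yours.

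The one noteworthy difference is in how you control the bad event in Step~1. You build an $\epsilon$-net of caps and prove a covering lemma (every cap of measure $\geq 2t_m$ contains a net-cap), then union-bound over the $O(m/\log m)$ net-caps; you even flag this covering step as the ``main technical obstacle.'' The paper sidesteps it entirely: it takes the union bound directly over the $\binom{m}{n}$ candidate facets, observing that if a fixed $n$-tuple cuts off a cap of measure at least $2(n+1)(\log m)/m$, the probability the remaining $m-n$ points all miss that cap is at most $m^{-(n+1)}$, so the \emph{expected number} of large-cap facets is $O(m^{-1})=o(1)$. This is more elementary and needs no net construction. Your route, on the other hand, yields the sharper tail $\Pr[\overline E]=o(m^{-n})$, which is what lets you run the clean $\mathbf{1}_E/\mathbf{1}_{\overline E}$ split with the trivial $f_{n-1}\le m^n$ bound on the complement; the paper instead just splits the expectation into ``large-cap facets'' and ``small-cap facets'' and bounds each directly, so it never needs a strong tail. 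Both arguments are fine, but you should know the covering lemma you were worried about is not actually required.
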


\subsection{High-level view of the computation} \label{highlevel}
In this section we overview the integral one would set up to compute the expected number of facets in $P \sim P(n, m)$. However, rather than evaluating this integral as in \cite{Borgwardt} or \cite{BMT}, we will estimate it from above using the geometry of the $(n-1)$-dimensional sphere. Under the model $P(n, m)$, we may assume that our vertices are in general position, and thus any set of $n$ points from our set of $m$ points $a_1, ..., a_m$ potentially determines a facet. By linearity of expectation, it suffices to compute the probability that $a_1, ..., a_n$ determines a facet. Indeed if we denote this probability by $p_n$ then clearly the expected number of facets is $\binom{m}{n} p_n$. \\

Suppose first that $a_1, ..., a_n$ are fixed, and we wish to bound the probability that these $n$ vertices determine a facet. Let $H(a_1, ..., a_n)$ denote the hyperplane spanned by $a_1, ..., a_n$. Now $H(a_1, ..., a_n)$ determines a facet of $P(n, m)$ if and only if all other $m - n$ points lie on the same side of $H(a_1, ..., a_n)$ as each other. Of course as $m$ tends to infinity the probability that $P \sim P(n, m)$ contains the origin in its interior converges quickly to 1, thus the probability that $H(a_1, ..., a_n)$ determines a facet of $P$ is asymptotic to the probability that all other $m - n$ points lie on the same side of $H(a_1, ..., a_n)$ as the origin.  Denote this half-space by $H^-(a_1, ..., a_n)$ and the other half-space by $H^+(a_1, ..., a_n)$. The next task will be to determine the probability that a randomly chosen point on $S^{n - 1} \subseteq \R^n$ lies in $H^+(a_1, ..., a_n)$. \\

For the hyperplane $H(a_1, ..., a_n)$, we let $h(a_1, ..., a_n)$ denote its distance from the origin. This notation give us a very convenient description of $S^{n - 1} \cap H^+(a_1, ..., a_n)$ in the following claim. 
\begin{claim}
$S^{n - 1} \cap H^+(a_1, ..., a_n)$ is an $n$-dimensional spherical cap of height $1 - h(a_1, ..., a_n)$. 
\end{claim}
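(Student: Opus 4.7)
The plan is to unwind the definition of spherical cap by making a convenient choice of coordinates, after which the claim becomes a one-line verification. I would first observe that because $h := h(a_1, \ldots, a_n)$ is the distance from the origin to the hyperplane $H(a_1, \ldots, a_n)$, there is a unit vector $v$—namely the unit normal to $H$ pointing away from the origin, i.e., into $H^+$—such that $H = \{x \in \R^n : \langle x, v\rangle = h\}$ and $H^+ = \{x \in \R^n : \langle x, v\rangle > h\}$.

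Next I would apply an orthogonal transformation sending $v$ to $e_1$. Since rotations of $\R^n$ preserve $S^{n-1}$ and all Euclidean distances, this does not change the geometric content of the claim. In the new coordinates $S^{n-1} \cap H^+$ becomes $\{x \in S^{n-1} : x_1 \geq h\}$, which by definition is a spherical cap with axis $e_1$. The apex of this cap—the point of $S^{n-1}$ maximizing $x_1$—is $e_1$ itself, at distance $1$ from the origin, while the base of the cap lies in the hyperplane $\{x_1 = h\}$ at distance $h$ from the origin. Hence the height of the cap, measured along the axis from base to apex, is exactly $1 - h$.

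The statement is essentially a matter of unpacking definitions, so there is no real obstacle; the only things to double-check are (i) that I have selected the correct half-space, namely that $H^+$ is the one not containing the origin so that its intersection with $S^{n-1}$ is the "smaller" cap, and (ii) that the picture is non-degenerate. Point (ii) holds because any $a_i$ lies in $H \cap S^{n-1}$, forcing $h \leq 1$, and because under $P(n, m)$ the vertices are almost surely in general position so that $0 < h < 1$ and the cap is nontrivial.
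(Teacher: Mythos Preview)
Your proposal is correct and follows essentially the same approach as the paper: the paper's proof is the single sentence that the claim is immediate by rotating so that the normal to $H$ points along $h e_n$, which is exactly your rotation sending $v$ to $e_1$ (the choice of axis is immaterial). Your write-up is simply a more detailed version of that one-line argument, with the added checks on non-degeneracy and the choice of half-space.
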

The proof of this claim is immediate by using rotational symmetry of the random model to rotate the given hyperplane so that the vector normal to it is the vector $h e_n$. \\

Now for any $h$, we define $SA_n(h)$ to be the $(n-1)$-dimensional Lebesgue measure of the $(n-1)$-dimensional spherical cap of height $h$. We also let $s_n$ denote the $(n-1)$-dimensional Lebesgue measure (the hyper-surface area) of the unit sphere in $\R^n$. For a fixed set of $n$ points (in general position) $a_1, ..., a_n$, the probability that a randomly chosen point on the $n$-dimensional unit sphere lies in the $H^+(a_1, ..., a_n) \cap \omega_n$ is 
$$\frac{SA_n(1 - h(a_1, ..., a_n))}{s_n}.$$

From this it follows that the following integral computes the probability that $a_1, ..., a_n$ determines a facet of $P \sim P(n, m)$
$$\int_{(a_1, ... , a_n) \in S^{n - 1}} \left(1 - \frac{SA_n(1 - h(a_1, a_2, ..., a_n))}{s_n} \right)^{m - n} da_1 da_2, ... da_n$$

We refer the reader to for example \cite{BeneathBeyond} to see the evaluation of this integral. Our purpose here is to use the idea which sets up this integral to estimate the complexity of $P \sim P(n, m)$. \\

 Essentially there are two (random) properties of $H(a_1, ..., a_n)$ that we will consider. The first is the distance from $H(a_1, ..., a_n)$ from the origin, that is the value of $h(a_1, ..., a_n)$, and the second is whether or not $H(a_1, ..., a_n)$ induces a facet. Notice that the first of these properties depends on $a_1, ..., a_n$, and the second depends on $a_{n + 1}, ..., a_m$. These events are not independent from one another however. Indeed the further $H(a_1, ..., a_n)$ is from zero, the less space it leaves for a point among $a_{n + 1}, ..., a_m$ to appear on the other side of the hyperplane. Thus the further $H(a_1, .., a_n)$ is from zero, the more likely it is to induce a facet.\\

As we consider these two events, our argument will split into two steps. In the first step we show that if $H(a_1, ..., a_n)$ is close to the origin (which here will mean that $h(a_1, ..., a_n) < 1 - \delta$ for some $\delta$ depending on $m$ and $n$), then the probability that $H(a_1, ..., a_n)$ induces a facet of $P \sim P(n, m)$ is at most $o(m^n)$. This implies that the expected number of such facets is at most $m^n (o(m^n)) = o(1)$. In other words, with high probability the Hausdorff distance of $P$ to $S^{n - 1}$ is at most $\delta$. Essentially this means that almost all of the hyperplanes contribute nothing to the random polytope. \\

In the second step we handle hyperplanes that are far from the origin, that is hyperplanes the cut off a small spherical cap. In this case $a_1, ..., a_n$ all have to be close to one another. It is easy to verify that if the distance (in $\R^n$) from $a_i$ to $a_j$ is at least $\epsilon$ then the midpoint of the segment through $a_i$ and $a_j$ has norm at most $1 - \epsilon^2/8$ and that is an upper bound for the distance from the origin to any hyperplane containing $a_i$ and $a_j$. Thus, by the choice of distribution which defines $P(n, m)$ one should expect few hyperplanes defined by vertices which are close together. \\

We now make this argument precise.


\subsection{Proof of Theorem \ref{spherecomplexity}}
\begin{proof}[Proof of Theorem \ref{spherecomplexity}]
As described above we seek $\delta = \delta(n, m)$ so that we may say that asymptotically almost surely no facet of $P$ is within distance $1 - \delta$ of the origin, and so that there are only a few possible facets within distance $\delta$ of the boundary. \\

With foresight define $\delta = \delta(n, m)$ to be such that 
$$\frac{SA_n(\delta)}{s_n} = \frac{2(n + 1)\log m}{m}.$$

Now, if $a_1, ..., a_n$ satisfies $h := h(a_1, ..., a_n) \leq 1 - \delta$, then the probability that $H(a_1, ..., a_n)$ determines a hyperplane is at most
$$G(h)^{m - n} = (1 - SA_n(1 - h)/s_n)^{m - n} \leq (1 - SA_n(\delta)/s_n)^{m - n} \leq (1 - 2(n + 1)\log m/m)^{m - n}$$
For $m \geq 2n$, this is at most $m^{-(n + 1)}$. Thus taking a union bound over all $\binom{m}{n}$ possible facets, we have that the expected number of hyperplanes which cut off a spherical cap of height greater than $\delta$ and induce a facet of $P \sim P(n, m)$ is at most $m^{-1} = o(1)$.\\

The above implies that with probability at most $1/m$ all facets of $P(n, m)$ are within Hausdorff distance $\delta$ of the boundary of the unit ball. (Note that the Hausdorff distance between two sets $A$ and $B$ is defined to be the maximum over all points $a \in A$ of the distance from $a$ to $B$. In the present setting this means that $P$ is entirely contained in the spherical shell obtained by removing the $n$-dimensional ball centered at the origin of radius $1 - \delta$ from the $n$-dimensional unit centered at the origin.) Intuitively, this means that all facets are small. Therefore we will upper bound the number of facets by finding an upper bound on the number of small facets. To do so, we will consider the local picture. Fix a vertex $a_1$. For any $a_2, ..., a_n$, we bound the probability that $a_2, ..., a_n$ are all close enough to $a_1$ that the facet induced is small. \\

The first task is to quantify ``close enough" and, by extension, what we mean by a "small facet" quantitatively. If the facet induced by $a_1, ..., a_n$ is to be within Hausdorff distance $\delta$ of the unit sphere, and all vertices are on the unit sphere then $a_2, ..., a_n$ must all lie within a spherical cap of height at most $4 \delta$ centered at $a_1$. This follows by the Pythagorean theorem, and we refer the reader to the appendix for the details. Now for fixed $a_1$ we bound the probability that $a_2, ..., a_n$ are all within the spherical cap of height $4 \delta$ centered at $a_1$.\\

In the first step, we did not need to know anything about the behavior of the function $SA_n(y)$. Here however, we do but only insofar as we will need bounds when $y$ is small as $\delta$ tends to zero as $m$ grows.  In the appendix we show that there exists a constant $C$ depending on $n$ so that for $y$ small enough, $2C y^{(n - 1)/2} \geq SA_n(y) \geq C y^{(n - 1)/2}$. It follows that for $m$ large enough we have 
\begin{eqnarray*}
\frac{SA_n(4 \delta)}{s_n} &\leq& \frac{2C(4 \delta)^{(n - 1)/2}}{s_n} \\
&\leq& \frac{2(4^{(n - 1)/2}) C \delta^{(n - 1)/2}}{s_n} \\
&\leq& \frac{2(4^{(n - 1)/2}) SA_n(\delta)}{s_n} \\
&=& \frac{16(4^{(n - 1)/2}) (n + 2)\log m}{m}
\end{eqnarray*}
Thus the probability that $a_2, ..., a_n$ all lie in this spherical cap is at most 
$$\left(\frac{16(4^{(n - 1)/2}) (n + 2)\log m}{m} \right)^{n - 1}$$
Multiplying over all choices of $a_2, ... , a_n$ gives us that the expected number of facet containing $a_1$ given that no facets are within distance $1 - \delta$ of the origin is at most
\begin{eqnarray*}
\binom{m - 1}{n - 1} \left(\frac{16(4^{(n - 1)/2}) (n + 2)\log m}{m} \right)^{n - 1} &\leq& C_n \log^{n - 1} m,
\end{eqnarray*}
where $C_n$ is a constant that depends only on $n$. Thus taking a sum over all $m$ vertices gives the result. 
\end{proof}

\begin{remark}
Our proof actually shows something stronger than just that the expected number of facets is $O(m \log^{n - 1} m)$, it in fact bounds the probability that the number of facets exceeds this bound quite well. Indeed the first step shows that with probability $O(1/m)$ the Hausdorff distance between $P$ and $S^{n - 1}$ is at most $\delta$ and the second step bounds the number of vertices that are close to a given vertex, however this is controlled by a binomial random variable with $m$ trials and probability $SA_n(4 \delta)/s_n$. Thus one has that with probability $1 - o(1/m)$ every vertex has at most $C_n \log m$ vertices in its link, and so we have the following result regarding the concentration of the complexity of $P \sim P(n, m)$. 
\end{remark}
\begin{theorem}\label{concentration}
For any fixed $n$ and $K$ there exists a constant $C = C(n, K)$ so that with probability at least $1 - O(1/m^K)$ the number of facets of $P(n, m)$ is at most $C m \log^{n - 1} m$. 
\end{theorem}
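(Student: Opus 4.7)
The plan is to strengthen each of the two steps of the proof of Theorem \ref{spherecomplexity} so that the failure probability in each step becomes polynomially small in $m$ with a tunable exponent, rather than merely $o(1)$, and then conclude by a union bound together with a Chernoff-type concentration estimate for the local counting argument.

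In the first step of the original proof, $\delta = \delta(n,m)$ was chosen so that $SA_n(\delta)/s_n = 2(n+1)\log m / m$, which made the expected number of facets at Hausdorff distance more than $\delta$ from $S^{n-1}$ of order $1/m$. The first modification is simply to inflate the constant: choose $\delta = \delta(n, m, K)$ so that $SA_n(\delta)/s_n = A \log m / m$ for a constant $A = A(n, K)$ to be determined. Repeating the calculation verbatim, a hyperplane at distance at most $1 - \delta$ from the origin induces a facet with probability at most $(1 - A\log m/m)^{m - n} \leq m^{-A/2}$ for $m$ large. Taking a union bound over the $\binom{m}{n} \leq m^n$ potential defining tuples, the expected (and hence the probability of any) such facet is at most $m^{n - A/2}$. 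Choosing $A \geq 2(n + K)$ ensures this is $O(1/m^K)$.

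For the second step, one cannot simply use expectation and Markov, since I want a $1/m^K$ bound on the tail. Instead I will use a Chernoff bound. Condition on the good event from the first step: every facet lies in the spherical shell between radii $1 - \delta$ and $1$. Fix a vertex $a_1$. As in the original proof, any facet containing $a_1$ has its other $n - 1$ vertices inside a spherical cap around $a_1$ of height at most $4\delta$, and the number $N(a_1)$ of other sample points in this cap is distributed as $\mathrm{Bin}(m - 1, p)$ with $p = SA_n(4\delta)/s_n$. Using the appendix estimate $SA_n(4\delta) \leq 2 \cdot 4^{(n-1)/2}\, SA_n(\delta)$ as in the original argument gives $\E[N(a_1)] \leq B \log m$ for a constant $B = B(n, K)$. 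A standard Chernoff bound (for instance $\Pr[\mathrm{Bin}(m, p) \geq t\, \E] \leq e^{-(t - 1)^2 \E/3}$ for $t \geq 1$) then gives $\Pr[N(a_1) \geq t B \log m] \leq m^{-K - 1}$ for $t$ a sufficiently large constant depending on $K$ and $n$. Union-bounding over the $m$ vertices, with probability $1 - O(1/m^K)$ every vertex has at most $D \log m$ other sample points in its cap, where $D = D(n, K)$.

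Once this is established, the conclusion is immediate. Each facet containing $a_1$ is determined by an $(n-1)$-subset of the vertices inside the cap around $a_1$, so $a_1$ is contained in at most $\binom{D \log m}{n - 1} \leq C'(n, K)\, \log^{n-1} m$ facets. Summing over the $m$ vertices and dividing by $n$ (the number of vertices per facet) yields at most $C(n, K)\, m \log^{n-1} m$ facets. Combining the failure probabilities of the two steps gives the claimed bound with probability at least $1 - O(1/m^K)$. The only nonroutine point is arranging the Chernoff step to give a $1/m^{K+1}$ tail so that it survives the union bound over vertices; since $\E[N(a_1)] = \Theta(\log m)$, this is exactly the regime where Chernoff yields a polynomial tail with exponent controlled by the chosen constant, so no new difficulty arises beyond bookkeeping of the constants.
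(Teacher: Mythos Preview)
Your proposal is correct and follows essentially the same approach as the paper: adjust the constant in the definition of $\delta$ so that the first step (no facet far from the sphere) fails with probability $O(1/m^K)$, and replace the expectation bound in the second step by a Chernoff bound on the binomial number of points in the cap around each vertex, followed by a union bound over vertices. The paper's own proof is a one-line remark that the argument of Theorem~\ref{spherecomplexity} goes through with $\frac{SA_n(\delta)}{s_n} = \frac{(K+n)\log m}{m}$, together with the preceding Remark observing that the second step is governed by a binomial random variable; your write-up simply makes this explicit.
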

The proof of this theorem is exactly like the proof of Theorem \ref{spherecomplexity} with $\delta$ set (for $m$ large enough) so that
$$\frac{SA_n(\delta)}{s_n} = \frac{(K + n) \log m}{m}.$$




\section{The shadow-vertex algorithm on random polytopes}
Random polytopes are also interesting from the point of view of average case analysis of algorithms. On of the most ambitious instances of such analysis is K.H. Borgwardt's book \emph{The simplex method. A probabilistic analysis} \cite{Borgwardt}. In \cite{Borgwardt}, Borgwardt analyzes the shadow-vertex algorithm for linear programming and as the main result of his book proves the following about the average complexity.
\begin{theorem}[Borgwardt \cite{Borgwardt}]
For all distributions according to the rotation-invariant model, the shadow-vertex algorithm solves the complete linear programming problem in at most 
$$m^{1/(n - 1)}(n + 1)^4 \frac{2 \pi}{5} \left(1 + \frac{e\pi}{2} \right)$$
pivot steps on average.
\end{theorem}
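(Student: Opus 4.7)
The plan is to reduce the pivot count to the number of vertices of a 2-dimensional shadow polygon and then bound this quantity using the geometry of spherical projections and the rotation-invariance of the model, very much in the spirit of the proof of Theorem~\ref{spherecomplexity}. The first step is the standard shadow-vertex reduction: fix the interpolating plane spanned by the initial and target objective functionals, and bound the total pivot count by the number of vertices of the 2-dimensional projection $\pi(P)$ of $P$ onto this plane. Rotation-invariance of the random model then lets us assume without loss of generality that $\pi$ is the projection onto the first two coordinates, so the task becomes estimating $\E[|V(\pi(P))|]$, which is exactly the expected number of extreme points of the random planar point cloud $\{\pi(a_i)\}_{i = 1}^{m}$.

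Next I would compute the pushforward density. For $a$ uniform on $S^{n - 1}$ the marginal distribution of $(a^{(1)}, a^{(2)})$ has density on the unit disk proportional to $(1 - \|x\|^2)^{(n - 4)/2}$. Writing $\pi(a_i) = (r_i \cos\theta_i, r_i \sin\theta_i)$, the angle $\theta_i$ is uniform on $[0, 2\pi)$ and a short substitution of the type carried out in the appendix gives the radial tail $\Pr(r_i > 1 - t) = \Theta(t^{(n - 2)/2})$ as $t \to 0$. The geometric observation we need is elementary: a candidate extreme point at $(r, \theta)$ is knocked off the convex hull by a competitor $(r', \theta')$ as soon as $r' \cos(\theta' - \phi) > r \cos(\theta - \phi)$ for the relevant test direction $\phi$; in particular, at angular distance $\Delta\theta$ a competitor dominates once its radius exceeds $r/\cos(\Delta\theta) \approx r + (\Delta\theta)^2/2$.

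The counting argument is then an angular-sector decomposition analogous to the vertex-neighborhood count in the proof of Theorem~\ref{spherecomplexity}. Partition $[0, 2\pi)$ into $N$ equal sectors of angular width $2\pi/N$ and in each sector combine the dominance condition above with the radial tail to bound the expected number of extreme points contained in it: balancing the radial scale $t \sim N^{-2}$ coming from the dominance inequality against the tail $t^{(n - 2)/2}$ identifies $N \sim m^{1/(n - 1)}$ as the optimal scale and yields $O(1)$ extreme points per sector in expectation, hence $O(m^{1/(n - 1)})$ shadow vertices overall. The main obstacle in matching Borgwardt's sharp constant $(n + 1)^4 \cdot \frac{2\pi}{5}\bigl(1 + \frac{e \pi}{2}\bigr)$ is that this elementary sector/dominance argument loses polynomial-in-$n$ factors; recovering that sharp constant requires evaluating the spherical-cap integrals exactly as done in \cite{Borgwardt}, rather than merely bounding them, so an elementary proof in the style of this paper would only match the asymptotic rate $m^{1/(n - 1)}$ in $m$, not the full explicit constant.
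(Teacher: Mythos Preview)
The paper does not prove this theorem. It is quoted as Borgwardt's main result from \cite{Borgwardt} and immediately followed by the remark that ``Proving this theorem is quite technical with many clever techniques of integration required.'' What the paper actually proves is the weaker Theorem~\ref{asymptoticshadowcomplexity}, which gives only the rate $\tilde{O}(m^{1/(n-1)})$, only for the uniform distribution on $S^{n-1}$, and with no explicit constant. You acknowledge at the end of your proposal that an elementary argument cannot recover Borgwardt's constant, and that is exactly the paper's position as well; so there is no ``paper's own proof'' of this statement to compare against.

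If instead one compares your sketch to the paper's proof of Theorem~\ref{asymptoticshadowcomplexity}, the routes differ. You project $P$ onto a $2$-plane, write down the pushforward radial density, and run an angular-sector extreme-point count on the planar cloud. The paper works dually via Lemma~\ref{shcomplexity}: it \emph{intersects} $P$ with the $2$-plane and bounds the number of edges of the resulting polygon by conditioning on two high-probability events (all edges of $P$ short; all vertex degrees $\tilde{O}(1)$), after which only vertices with $x_1^2+x_2^2 \geq 1-\epsilon^2$ can contribute, and Lemma~\ref{beltlemma} counts those directly. The paper then handles the multi-phase structure of the shadow-vertex algorithm (the loop over $k=2,\dots,n$) by arguing that each phase's shadow polygon is dominated by a single planar projection of $Y$; your proposal collapses this to a single shadow without addressing the phase-by-phase reduction. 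Your sector argument is closer in spirit to Borgwardt's own organization, but as written the dominance inequality (``a competitor dominates once its radius exceeds $r + (\Delta\theta)^2/2$'') does not yet bound the number of extreme points per sector: you still need to turn the balance $t\sim N^{-2}$, $N\sim m^{1/(n-1)}$ into an actual expectation estimate, e.g.\ by showing that with controlled probability the maximal radius in each sector is attained within the top $t$-band.
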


Proving this theorem is quite technical with many clever techniques of integration required. One of the primary goals presently will be to, in a few pages, establish a result close to Borgwardt's for the uniform distribution on the $n$-dimensional sphere. Indeed we will prove the following asymptotic result in order to aide in understanding how the geometry of the sphere contributes to the low complexity of the algorithm without requiring the technical steps necessary for Borgwardt's full result. \\
\begin{theorem}\label{asymptoticshadowcomplexity}
Fix $n \geq 2$, then for $P \sim P(n, m)$ and $v$ a random vector on the unit sphere, the expected number of pivot steps required to solve the corresponding linear programming problem via the shadow-vertex algorithm is on average at most $\tilde{O}(m^{1/(n - 1)})$.
\end{theorem}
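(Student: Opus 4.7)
The plan is to use the standard reduction from the complexity of the shadow-vertex method to the combinatorial complexity of a two-dimensional projection. Let $E \subseteq \R^n$ denote the 2-plane spanned by the objective vector $v$ together with the auxiliary direction used to initialize the algorithm; then the number of pivot steps is at most the number of vertices of the 2D shadow $\pi_E(P)$, since the algorithm traverses edges along the boundary of this polygon. A vertex $a_i$ of $P$ projects to a vertex of $\pi_E(P)$ iff there exists $u \in E \setminus \{0\}$ for which $a_i$ uniquely maximizes $\langle \cdot, u \rangle$ over the sample. Because every $a_j$ lies on $S^{n-1}$, this is equivalent to $a_i$ being the spherical nearest-neighbor of $u/|u|$ among $a_1, \ldots, a_m$. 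Hence the shadow vertices are exactly those generators whose spherical Voronoi cells meet the great circle $C := S^{n-1} \cap E$.

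The argument then proceeds in two steps, closely paralleling the proof of Theorem \ref{spherecomplexity}. First I would show that with probability $1 - o(1)$ the spherical covering radius of $\{a_1, \ldots, a_m\}$ is at most $r_m := (K \log m / m)^{1/(n-1)}$, for a constant $K = K(n)$ chosen large enough. This is a standard net argument: take an $r_m/2$-net on $S^{n-1}$ of size $O(r_m^{-(n-1)})$; each spherical cap of radius $r_m/2$ has measure $\Theta(r_m^{n-1})$ (using the same cap-measure estimate invoked in Theorem \ref{spherecomplexity}), so by a union bound the probability that some net-cap is empty is $O(r_m^{-(n-1)}) \cdot \exp(-\Theta(m r_m^{n-1})) = o(1)$ for $K$ large. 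Conditionally on this good event, every shadow vertex $a_i$ satisfies $\dist(a_i, C) \leq r_m$, since $a_i$ lies within angular distance $r_m$ of its witness $u \in C$.

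Finally I would count points in the tube: the $(n-1)$-dimensional measure of $T_{r_m} := \{x \in S^{n-1} : \dist(x, C) \leq r_m\}$ scales as $\Theta(r_m^{n-2})$ for small $r_m$, the great circle providing one tangential and $n-2$ transverse directions. Hence the expected number of $a_i$ in $T_{r_m}$ is $m \cdot \Theta(r_m^{n-2}/s_n) = \Theta((\log m)^{(n-2)/(n-1)} \cdot m^{1/(n-1)}) = \tilde{O}(m^{1/(n-1)})$, and the trivial $O(m)$ bound on the rare bad event contributes only $o(1)$ for suitable $K$. I expect the main obstacle to be a clean quantitative covering-radius estimate that keeps the polylogarithmic factors under control while meshing with the tube-measure computation; by contrast the reduction from pivot count to shadow vertex count is completely standard in shadow-vertex analysis, and the spherical geometry parallels the appendix estimates used for Theorem \ref{spherecomplexity}.
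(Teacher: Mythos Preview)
Your covering-radius and tube-measure steps are both correct and rather elegant, but the opening reduction is off in this paper's setup. Here the points $a_1,\ldots,a_m$ are the \emph{constraint normals} of the LP, so the feasible region is (essentially) the polar $X=P^\circ$, not $P$ itself. The shadow-vertex method walks along the boundary of $\pi_E(X)$, and by the polar identity $(\pi_E X)^\circ = X^\circ\cap E$ the number of vertices of $\pi_E(X)$ equals the number of edges of $P\cap E$. Thus the quantity governing the pivot count is the number of \emph{facets of $P$ that meet the $2$-plane $E$}, not the number of vertices of $\pi_E(P)$. Your Voronoi-cell characterization correctly describes the latter but does not by itself control the former, so as written the argument bounds the wrong shadow.

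The paper closes this gap with an extra ingredient you do not have. It conditions on two events: every edge of $P$ has length at most $\epsilon\asymp(\log m/m)^{1/(n-1)}$, and every vertex lies in at most $\Delta=\tilde O(1)$ facets. Under short edges, any facet meeting $E$ has all its vertices within $\epsilon$ of the great circle $C=S^{n-1}\cap E$; the belt estimate (Lemma~\ref{beltlemma}) then gives $\tilde O(m^{1/(n-1)})$ such vertices---this is exactly your tube count---and multiplying by the degree bound $\Delta$ yields the facet bound. Your covering-radius argument can in fact replace the paper's ``short edges'' step (covering radius $\le r_m$ forces every facet hyperplane within $O(r_m^2)$ of the sphere, hence every edge has length $O(r_m)$), and is arguably cleaner. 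But you still need the vertex-degree bound, or some equivalent device, to pass from ``few vertices lie near $C$'' to ``few facets meet $E$''; that is the piece your outline is missing.
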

Note that we are using $\tilde{O}$ for the usual soft-$O$ notation, that is the upper bound is of the form $C m^{1/(n - 1)} (\log m)^k$ where $k$ and $C$ depend only on $n$. \\

Before describing the shadow-vertex algorithm in detail, recall that a linear programming problem is a problem of the following form for vectors $v, x, a_1, ...., a_m$ in $\R^n$ and $\langle \cdot, \cdot \rangle$ denoting the usual inner product on $\R^n$.
\begin{center}
Maximize $\langle v, x \rangle$ subject to $\langle a_1, x \rangle \leq 1$, ...., $\langle a_m, x \rangle \leq 1$. 
\end{center}
Note that we have normalized the problem by setting the right-hand sides to 1 and that we do not require that $x \geq 0$. Of course, the feasible region of this problem is a polytope with facets determined by $a_1, ..., a_m$ and the problem is to determine the vertex of this polytope closest to $v$. \\

As is often the case, it is convenient to consider the dual version of this problem. If $X$ is the polytope given by the feasible region of the problem above, then the dual polytope $Y$ is defined by be $\{y \in \R^n \mid \langle x, y \rangle \leq 1$ for all $x \in X\}$. It is easy to see that, in this case $Y$ is the convex hull of $0, a_1, ..., a_m$ (see for example Lemma 1.5 of \cite{Borgwardt}). One also observes that in the dual situation, the linear programming problem is to find the facet of $Y$ through which the ray $\R^{+} v$ passes. Just as in \cite{Borgwardt} we analyze the average complexity of the shadow-vertex algorithm by analyzing its dual description.  In particular this fits with the vertex-description we have used to define $P(n, m)$. \\

In the dual description, the approach of the shadow-vertex algorithm, first described by \cite{GassSaaty}, will be to start at dimension 2 and increase one dimension at a time, at each stage finding a new vertex $a_t$ which is a vertex of the optimal facets. At the end all vertices of this optimal facet will be determined. The ``shadow" portion of this algorithm comes from the fact that at dimension $k \geq 3$, we have the first $k - 1$ vertices of our optimal simplex and to find the next vertex we will run the usual simplex algorithm on the \emph{polygon} given by intersecting our $k$-dimensional polytope with some appropriately-chosen 2-dimensional plane. Doing this will give us the next vertex and allow us to move to $k + 1$ dimensions and repeat the process. Thus we are always considering some 2-dimensional shadow of our polytope. Before describing the entire algorithm, we will prove the following lemma from which Theorem \ref{asymptoticshadowcomplexity} will follow almost immediately.

\begin{lemma}\label{shcomplexity}
Fix $n \geq 2$, and $u$ and $v$ nonparallel vectors in $\R^n$. Let $P \sim P(n, m)$ and let $P'$ denote the intersection of $P$ with the plane $\Span(u, v)$, then the expected number of edges of $P'$ is $\tilde{O}(m^{1/(n - 1)})$. 
\end{lemma}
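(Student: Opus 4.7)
The plan is to count the edges of $P'$ by counting the facets of $P$ that meet the plane $L := \Span(u, v)$: in general position, each edge of $P'$ is $F \cap L$ for a unique facet $F$ of $P$, and whenever $F$ meets $L$ at all, the intersection is generically one-dimensional. Fix a large constant $K$ and choose $\delta = \delta(n, m)$ so that $SA_n(\delta)/s_n = (K + n)\log m / m$; by the appendix estimate on $SA_n$ this gives $\delta = \tilde{O}(m^{-2/(n-1)})$. By Theorem~\ref{concentration}, with probability $1 - O(1/m^K)$ every facet of $P$ lies in a spherical cap of height at most $\delta$, and hence has Euclidean diameter $O(\sqrt{\delta})$; I condition on this event and absorb the negligible complement contribution into the final bound.

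The key geometric step is a localization: if a facet $F$ of height $\geq 1 - \delta$ meets $L$ in a point $p$, then every vertex $a_i$ of $F$ satisfies $\|a_i - p\| \leq \diam(F) = O(\sqrt{\delta})$, so $\dist(a_i, L) = O(\sqrt{\delta})$. Thus the vertices of every such facet lie in the tube
\[
R := \{ x \in S^{n-1} : \dist(x, L) \leq c\sqrt{\delta}\,\}
\]
for an appropriate constant $c$. Because $L \cap S^{n-1}$ is a great circle and $R$ is its tubular neighborhood of radius $O(\sqrt{\delta})$ inside the $(n-1)$-sphere, the same cap-volume estimate used in the appendix, applied in the $(n-2)$ normal directions, yields
\[
|R|/s_n = O\!\left(\sqrt{\delta}^{\,n-2}\right) = \tilde{O}\!\left(m^{-(n-2)/(n-1)}\right).
\]

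The count now runs exactly as in Theorem~\ref{spherecomplexity}. An $n$-subset $\{a_{i_1}, \ldots, a_{i_n}\}$ that induces a facet meeting $L$ must satisfy, on the high-probability event, both $a_{i_1} \in R$ and $a_{i_2}, \ldots, a_{i_n} \in \mathrm{cap}(a_{i_1}, 4\delta)$. Using independence of the $a_i$ and the fact that the cap area around any point equals $SA_n(4\delta)$, the expected number of edges of $P'$ is bounded up to a constant by
\[
\binom{m}{n} \cdot \frac{|R|}{s_n} \cdot \left(\frac{SA_n(4\delta)}{s_n}\right)^{n-1} = \tilde{O}(m^n) \cdot \tilde{O}\!\left(m^{-(n-2)/(n-1)}\right) \cdot \tilde{O}\!\left(m^{-(n-1)}\right) = \tilde{O}\!\left(m^{1/(n-1)}\right).
\]

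I expect the main obstacle to be the geometric step identifying the correct tube radius and the accompanying tube-volume estimate: once one sees that small-diameter facets meeting $L$ must be centered near the great circle $L \cap S^{n-1}$, the counting is a mechanical recomputation of the bound from Theorem~\ref{spherecomplexity} restricted to $R$. The concentration of facet diameter given by Theorem~\ref{concentration} is exactly the ingredient that justifies this localization, and the tube-volume estimate itself is analogous to the spherical-cap asymptotics already established in the appendix.
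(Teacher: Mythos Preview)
Your argument is correct and follows essentially the same route as the paper: localize via the high-probability event that all facets have diameter $O(\sqrt{\delta})$, deduce that any facet meeting $L$ has all its vertices in the tube $R$ around the great circle $L\cap S^{n-1}$, estimate $|R|/s_n$ via Lemma~\ref{beltlemma}, and then count. The only cosmetic difference is in the final bookkeeping: the paper bounds the number of vertices in $R$ and multiplies by the max-degree bound $\Delta=\tilde O(1)$, whereas you bound the expected number of $n$-subsets with one vertex in $R$ and the rest in its $4\delta$-cap directly---both give $\tilde O(m^{1/(n-1)})$ for the same reason.
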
 
Before presenting the proof completely, we sketch the idea. First of all, the edges of $P'$ are determined by the facets of $P$ which are intersected by $\Span(u, v)$. Clearly, a facet $\sigma$ contributes an edge to $P'$ if and only if $\sigma \cap \Span(u, v)$ is nonempty. Moreover, by non-degeneracy, the intersection of a facet at $\Span(u, v)$ is a line segment or empty. Thus every facet contributes at most one edge to $P'$. \\

Now as we showed above, for $P \sim P(n, m)$ we have that the expected degree of every vertex is $\tilde{O}(1)$ indeed it is $O(\log^{n - 1}(m))$ as in the proof of Theorem \ref{spherecomplexity}. Moreover, the proof of this fact comes from showing that with high probability the facets that contain any fixed vertex $a_i$ have all of their vertices near $a_i$. Thus we actually have that all the edges of $P$ are short. It follows therefore that if we take the 2-dimensional disk given by the intersection of the $n$-dimensional ball and $\Span(u, v)$, we have that vertices that are too far from this intersection cannot contribute any of their cofacets to $P'$. Thus the problem becomes to bound the number of vertices close to the disk. It turns out that it will ultimately suffice to bound the number of vertices which \emph{project} near the boundary of the 2-dimensional disk given by the intersection. This is illustrated in Figure \ref{figcircle}. Once we upper bound this (by $\tilde{O}(m^{1/(n - 1)})$) we can multiply by the $\tilde{O}(1)$ bound on the maximum vertex degree to upper bound the number of facets that contribute an edge to $P'$. 
\begin{center}
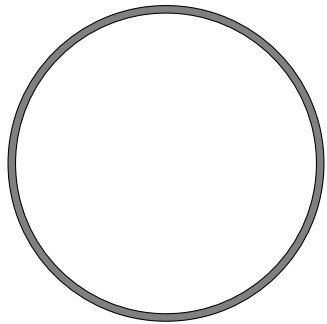
\begin{figure}[H]\label{figcircle}
\centering
\begin{tikzpicture}

    \fill[gray] (0, 0) circle (2.1);
    \fill[white] (0, 0) circle(2);
        \draw (0, 0) circle (2);
    \draw (0, 0) circle (2.1);
\end{tikzpicture}
\caption{We intersect $P$ with a 2-dimensional plane. Above we see the intersection of the plane with the $n$-dimensional ball. Only those vertices which \emph{project} close to the boundary (in the gray region) contribute to the complexity of $P'$, thus the task becomes to estimate the number of such vertices.}
\end{figure}
\end{center}

\begin{proof}[Proof of Lemma \ref{shcomplexity}]
By rotational symmetry of $P(n, m)$ we may assume without loss of generality that $u = (1, 0, ..., 0)$ and $v = (0, 1, ..., 0)$. Now a facet $\sigma$ of $P$ will determine an edge of $P'$ if and only if $\sigma \cap \Span(u, v)$ is nonempty. By nondegeneracy, which we may assume since the points $a_1, ..., a_m$ are chosen independent of $u$ and $v$, we have that $\sigma$ contributes at most one edge to $P'$. The goal will be to upper bound the number of such facets. \\

Of course, with high probability, the maximum degree of a vertex in $P \sim P(n, m)$ is $\tilde{O}(1)$, so in this case it will suffice to count the number of vertices that belong to a facet that contributes an edge to $P'$. Indeed one can count the number of such vertices and multiply by the degree bound to overestimate the number of edges in $P'$. \\

Now that we have reduced the problem to enumerating vertices contributing cofacets to $P'$, we prove that with high probability the edges of $P$ are short, and thus verices that are too far away from the intersection plane cannot contribute a facet to be an edge of $P'$. More precisely we will set $\epsilon$ and $\Delta$ and condition on the event that the following two events hold for $P$.
\begin{enumerate}
\item Every edge of $P$ has length at most $\epsilon$, and
\item No vertex of $P$ is contained in more than $\Delta$ facets of $P$.
\end{enumerate}
Now $\epsilon$ and $\Delta$ will depend on $m$ and will come from the geometry of the $n$-dimensional ball. Let $C_1$ and $C_2$ be large enough constants (depending on $n$, but not on $m$) and let $$\epsilon = C_1\left(\log m/m \right)^{1/(n - 1)}$$ and $$\Delta = C_2 (\log m)^{n - 1}.$$ 

We first bound the probability that $P$ has an edge of length greater than $\epsilon$. Observe that if $a_1$ and $a_2$ are connected by an edge of $P$ and that edge has length at least $\epsilon$, then the distance from the origin to the line through $a_1$ and $a_2$ is at most $1 - \frac{\epsilon^2}{8}$, by the Pythagorean theorem. Thus, any hyperplane containing $a_1$ and $a_2$ is at least as close to the origin, so it suffices to bound the probability that $P$ has such a facet. \\

Suppose that the distance from $0$ to the hyperplane through $a_1, a_2, ..., a_n$ is at most $1 - \frac{\epsilon^2}{8}$, then the spherical cap on the other side of zero determined by this hyperplane has height at least $\frac{\epsilon^2}{8}$. Such a spherical cap has suface area at least $C_3(\frac{\epsilon^2}{8})^{(n - 1)/2}$ for some constant $C_3$ depending on $n$ (see Corollary \ref{geometrycorollary}). Thus the probability that it induces a facet is at most
$$\left(1 - C_3\left(\frac{\epsilon^2}{8}\right)^{(n - 1)/2}\right)^{m - n} \leq \exp\left(-(1 - o_m(1))C_4 m \epsilon^{n - 1}\right) = \exp(-(1 - o_m(1)) C_4 C_1 \log m)$$
Therefore the probability that there is a choice of $a_1, ..., a_n$ creating such a facet is at most 
$$m^n m^{-C_4C_1},$$ 
set $C_1$ large enough that this is $o(1/m^n)$, and this will be our bound of the probability that (1) fails to hold.\\

Now we turn our attention to (2). For (2) it will suffice to show that no vertex of $P$ is contained in more that $\sqrt[n]{\Delta}$ edges. Here we will make use of (1). If all the edges of $P$ have length less than $\epsilon$, then for any vertex $a_i$, all the neighboring vertices must lie in the spherical cap of height $\frac{\epsilon^2}{2}$. Thus the number of neighbors of $a_i$ is bounded by the number of vertices in this spherical cap. The probability that a fixed vertex $a_i$ lies in this spherical cap is at most $C_4 \left(\frac{\log m}{m}\right)$. Moreover the expected number of vertices in this cap is stochastically dominated by a binomial random variable with $m - 1$ trials and success probability $C_4 \left(\frac{\log m}{m}\right)$.  This has expectation $(1 - o(1))C_4 \log m$. Thus for $C_2$ sufficiently large, by Chernoff bound the probability that the degree of $a_1$ exceeds $C_2 \log m$ is $o(1/m^{n + 1})$. Thus the probability that some vertex has large degree is $o(m^{-n})$. Now the probability that (2) holds is at most
$$\Pr((2) \text{ holds} \mid (1)\text{ holds})\Pr((1)\text{ holds}) + \Pr((1)\text{ fails to hold}), $$
and we have showed that both of these terms are $o(m^{-n})$. \\

Finally we are ready to count the expected number of vertices which have a coface that contribute an edge to $P'$ when (1) and (2) hold. Fix a vertex $a_1$ and assume that (1) and (2) hold, then the distance from $a_1$ to the plane of intersection is at most $\epsilon$ if $a_1$ is to contribute an edge to $P'$. Indeed if the distance from the plane of intersection to $a_1$ is larger than $\epsilon$, but $a_1$ nonetheless contributes an edge to $P'$, then some facet containing $a_1$ intersects $\Span(u, v)$. Call such a facet $\sigma$ and let $x$ be a point of $\sigma \cap \Span(u, v)$. By (1), the distance from $a_1$ to $x$ is at most $\epsilon$ as all vertices of $\sigma$ live in the ball of radius $\epsilon$ around $a_1$, but $x$ is in the intersection plane, contradicting the assumed distance from $a_1$ to the plane. Thus we bound the number of vertices close to the intersection plane. \\

If $x = (x_1, ..., x_n)$ is a point inside the unit sphere then the distance squared from the intersection plane to $x$ is exactly $x_3^2 + \cdots + x_n^2$. If this is at most $\epsilon^2$, then $x_1^2 + x_2^2 \geq 1 - \epsilon^2$. So we bound the probability that a randomly chosen point on the unit sphere satisfies $x_1^2 + x_2^2 \geq 1 - \epsilon^2$. This is at most $C_5 \epsilon^{n-2} = O(m^{-(n - 2)/(n - 1)} \log m)$ (see Lemma \ref{beltlemma} of the appendix for details). Thus the expected number of such vertices given that (1) and (2) hold is at most $m^{1/(n - 1)} \log m$. And in that case, the number of facets contributing an edge to $P'$ is at most $m^{1/(n - 1)} \log m \Delta = \tilde{O}(m^{1/(n - 1)})$. On the other hand, the probability that (1) and (2) do not both hold is $o(m^{-n})$ and in this case the number of edges in $P'$ is trivially upper bounded by $m^n$, so the expectation when (1) and (2) do not both hold is $o(1)$. This finishes the proof.
\end{proof}

We are now ready to describe the full shadow vertex algorithm and then to give a new proof of an upper bound on its asymptotic average complexity. We present the dual description of the algorithm as is described in Borgwardt.  Given point $a_1, ..., a_m$ determining $Y := CH(0, a_1, ..., a_m)$ in $\R^n$, and a vector $v$ the shadow-vertex algorithm will find the facet of the convex hull intersected by $v$, or it will determine that no such facet exists. The algorithm proceeds in stages by dimension from 2 to $n$. At dimension $k$ the algorithm considers the projected linear programming problem in $\R^k$ given by projecting each $a_i$ and $v$ into $k$-dimensions by omitting the final $n - k$ coordinates of each. Following the notation of Borgwardt, let $\Pi_k$ denote this projection map from $\R^n$ to $\R^k$. The algorithm is as follows.
\begin{enumerate}
\item Apply the standard simplex algorithm to $\Pi_2(Y)$ with objective function $\Pi_2(v)$ to find an edge $CH(a_1, a_2)$ which is intersected by $\R^+\Pi_2(v)$. If such an edge does not exist, then there is no solution to the original problem, and the algorithm reports that is the case and STOPS.
\item Set $k = 2$
\item  While $k < n$ do
\item Set $k = k + 1$
\item Take the optimal simplex $CH(a_1, ..., a_{k - 1})$ found previously, this will correspond to a $(k - 2)$-dimensional face of $\Pi_k(Y)$. Find a boundary simplex $CH(a_1, .., a_{k - 1}, a_k^*)$ which belongs to $\Pi_k(Y)$. 
\item Let $C_k$ denote the polygon given by intersection $\Pi_k(Y)$ with the span of $\Pi_k(e_k)$ and $\Pi_k(v)$. Starting from the edge of $C_k$ determined by $CH(a_1, ..., a_{k - 1}, a_k^*)$, take pivot steps in $C_k$ to find an edge intersected by $\R^+\Pi_k(v)$.  If no such edge exists, report no solution to the original problem and STOP. Otherwise, this edge corresponds to a facet of $\Pi_k(Y)$ intersected by $\R^+ \Pi_k(v)$. This will be the optimal facet for the next step.
\item RETURN TO START OF LOOP.
\item Return $CH(a_1, ..., a_n)$. 
\end{enumerate}

Regarding the complexity, it is determined by the combinatorical complexity of each $C_k$ and of the first projection into $\Pi_k(Y)$. Notice that each $C_k$ is given by a projection to $\Pi_k(Y)$ and then an intersection with a certain plane determined by the random objective function $v$. In particular $C_k$ is not given directly by intersecting its defining 2-dimensional plane with $Y$, so we cannot directly apply Lemma \ref{shcomplexity}, we have to consider how the intermediate projection step affects the complexity of the resulting shadow polygon.\\

It turns out however, that this intermediate projection does not affect the complexity too much. If we generate $v$ randomly at the start then we have finitely many projection planes and intersection planes that are determined by $v$ (and a standard basis vector) or are fixed, thus by nondegeneracy we can consider one such plane at a time. If we have $Y$ and $k$, then $C_k$ is given by projection onto $e_1, ..., e_k$ followed by intersection with the plane given by $\Pi_k(v)$ and $e_k$, two vectors which trivially lie in $\Span\{e_1, ..., e_k\}$.  Moreover while $\Pi_k(v)$ and $e_k$ determine an intersection and not a projection, the analysis of the complexity in Lemma \ref{shcomplexity} is totally determined by the complexity of the corresponding \emph{projection}. Indeed counting the vertices in this projection is exactly what we did for the proof of Lemma \ref{shcomplexity} (see Figure 1). Thus the complexity of $C_k$ is bounded by the complexity of two sequential projections of $Y$, first onto $e_1, ..., e_k$ and then to $\Pi_k(v)$, $e_k$. But this is just the same as a single projection onto $\Pi_k(v), e_k$. Thus the complexity of $C_k$ is always at most $\tilde{O}(m^{1/(n - 1)})$ (and the same holds for the two dimensional step as we consider projection onto $e_1, e_2$).  \\

Since $n$ is fixed and $n$ determines the number of times the WHILE loop is repeated, we have the total complexity is $\tilde{O}(m^{1/(n - 1)})$ on average, immediately proving Theorem \ref{asymptoticshadowcomplexity}. \\

We have omitted the proof that the shadow-vertex algorithm solves the linear programming problem, but we refer the reader to Chapters 0 and 1 of  \cite{Borgwardt} for those details. What we have done, however, is given a shorter argument to explain the average complexity, which in the case of fixed $n$, still gives the right exponent on $m$.\\

\begin{remark}
One motivation to study the average-complexity of the simplex method is that in practice the simplex method works well, however \cite{KleeMinty} show that in the worst case the complexity of the simplex method is exponential. On the other hand a random model like $P \sim P(n,m)$ will likely not reflect a typical linear programming problem. In recent years \emph{smoothed analysis} has been considered as a mix of worse-case and average-case to try and better understand what happens in a setting more typical than one finds in a purely random setting. For more on smoothed analysis as it relates the simplex method we refer the reader to \cite{HowFast, DH, DS, ST}. 
\end{remark}

\begin{remark}
Establishing complexity results on linear programming is also related to Smale's ninth problem found in \cite{Smale} on determining the complexity of linear programming. In particular, Smale asks whether or not there is a \emph{strongly-polynomial time algorithm} for solving the linear programming problem. The random setting is a special case of this problem, and as Smale's ninth problem remains open any better understanding of the random case could help to solve this problem.
\end{remark}
\section{Computing the double description of a random polytope}
Convex hull algorithms also work well on random polytopes. This is discussed in, for example \cite{GiftWrapping, BeneathBeyond}. That is, given the vertices of a random polytope it is typically easy to find its hyperplane description. Indeed for any rotationally symmetric model with $m$ vertices chosen in $\R^n$, \cite{BeneathBeyond} gives very precise bounds in terms of both $n$ and $m$ on the expected complexity of computing the convex hull of the resulting polytope. In particular, the result of \cite{BeneathBeyond} implies that for $P(n, m)$ with $n$ fixed and $m \rightarrow \infty$, the expected complexity of the Beneath-Beyond algorithm is $O(m^2)$. Here we will give a proof, avoiding the integrals necessary to prove the precise statements in \cite{BeneathBeyond}, that the expected complexity of the Beneath-Beyond algorithm is $\tilde{O}(m^2)$ for $n$ fixed and $m$ tending to infinity. Once again we want the proof to come directly from the geometry of the sphere. \\

The Beneath-Beyond Algorithm is an incremental algorithm for computing the double description of a polytope (that is for describing its facets from its vertices or vice versa). It has been described and rediscovered several times, \cite{BeneathBeyond} credits it to \cite{Grunbaum} and \cite{ABS} credits it to \cite{Seidel} while pointing out that the first incremental algorithm for computing the double description of a polytope comes from \cite{MRTT}. For detailed discussion of the Beneath-Beyond algorithm we refer the reader to \cite{ABS, Edelsbrunner, Grunbaum, Joswig}.  For our discussion here we work with the primal interpretation of the Beneath-Beyond Algorithm, that is we assume that the vertices of a polytope $P$ are given and we compute the facets of $P$. This is done sequentially. We fix an order $a_1, a_2, ..., a_m$ on the vertices of $P$ and at step $t$ we have computed the facets of the convex hull of $0, a_1, ..., a_t$, we denote this polytope as $P_t$. Let $b_1, ..., b_l$ denote the vectors describing the facets of $P_t$. Explictly a point in $x \in R^n$ lies in the interior of $P_t$ if and only if $\langle b_i, x \rangle \leq 1$ for all $i \in \{1, ..., l\}$. The task at each step becomes to update the list of facets after adding $a_{t + 1}$. For $a_{t + 1}$ we check for each $b_i$ whether or not $a_{t + 1}$ lies on the same side of $b_i$ as the origin or not. If $a_{t + 1}$ lies on the same side of $b_i$ as the origin then we keep $b_i$ as a facet of $P_{t + 1}$, otherwise remove $b_i$ and add as new facets all those facet given by the cone an $(n - 2)$-dimensional facet of $b_i$ with cone point $a_{t + 1}$. Note that for this last step we should keep track of the $(n-2)$-facets of each facet we add along the way. Now if $P_t$ is a simplicial polytope we have $l$ linear inequalities to check to decide if each face belong to $P_{t + 1}$ and for those that do we have $n$ new faces to add. \\

In general, the Beneath-Beyond algorithm will produce a triangulation of the given polytope, and determining the actual facets from the triangulation requires a bit more work as discussed in \cite{ABS, Joswig}. In our setting, however, $P \sim P(n, m)$ will already be a simplicial polytope so finding the triangulation is equivalent to find the facets of the polytope, and moreover the polytopes at the intermediate steps $P_t$ will be distributed as $P(n, t)$. Intuitively we should expect the complexity of the Beneath-Beyond algorithm to be $\tilde{O}(m^2)$ because at step $t$ we expect $P_t$ to have $\tilde{O}(t)$ faces so the complexity of the algorithm is 
$$\sum_{t = 1}^{m} \tilde{O}(t) = \tilde{O}(m^2).$$
Indeed we make this precise here
\begin{theorem}
Fix $n \geq 2$, the expected complexity of the Beneath-Beyond Algorithm with input $P \sim P(n, m)$ is $\tilde{O}(m^2)$. 
\end{theorem}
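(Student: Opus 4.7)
The plan is to couple the incremental structure of the Beneath-Beyond algorithm with the complexity bound already proved in Theorem \ref{concentration}. Since the vertices $a_1, \ldots, a_m$ are i.i.d.\ uniform on $S^{n-1}$, the intermediate polytope $P_t = \mathrm{CH}(0, a_1, \ldots, a_t)$ is, on the event that $0$ lies in $\mathrm{CH}(a_1, \ldots, a_t)$, exactly distributed as a sample from $P(n,t)$. By standard arguments the complementary event has probability exponentially small in $t$, so up to a negligible additive term we may replace $P_t$ by a sample from $P(n,t)$ for the purpose of computing $\E[f_{n-1}(P_t)]$. Applying Theorem \ref{concentration} with a sufficiently large constant $K$ (say $K = n+2$) gives $\E[f_{n-1}(P_t)] = \tilde O(t)$, since the ``bad'' tail where the number of facets exceeds $C t \log^{n-1} t$ has probability $O(t^{-K})$ and is trivially bounded by $t^n$.

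Next I would bound the per-step work. At step $t+1$ the algorithm performs a linear scan over the $l := f_{n-1}(P_t)$ current facets to determine visibility from $a_{t+1}$, which costs $\Theta(l)$. For each visible facet $\sigma$ it removes $\sigma$ and adds one new facet per $(n-2)$-dimensional ridge of $\sigma$. Because the $a_i$ are almost surely in general position on the sphere, $P_t$ is a.s.\ simplicial, so each $\sigma$ is an $(n-1)$-simplex with exactly $n$ ridges. Consequently the number of facets inserted at step $t+1$ is at most $n$ times the number removed, and both are at most $l$. The total work at step $t+1$ is therefore $O(n\cdot f_{n-1}(P_t))$.

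Summing over $t$ and using linearity of expectation together with $\E[f_{n-1}(P_t)] = \tilde O(t)$, the expected total running time is
\[
\sum_{t=1}^{m} O\bigl(n\cdot \E[f_{n-1}(P_t)]\bigr) \; = \; \sum_{t=1}^{m} \tilde O(t) \; = \; \tilde O(m^2),
\]
which is the claimed bound.

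I expect the only real obstacle to be a bit of bookkeeping: reconciling $P_t = \mathrm{CH}(0, a_1, \ldots, a_t)$ with the model $P(n,t) = \mathrm{CH}(a_1, \ldots, a_t)$ that Theorem \ref{concentration} addresses, and verifying that Beneath-Beyond preserves simpliciality so that the ``$n$ ridges per facet'' bound holds at every step. The first is handled by noting that the two polytopes coincide once $0$ lies in the interior of $\mathrm{CH}(a_1, \ldots, a_t)$, an event whose complement contributes a negligible $o(1)$ to the expectation (and the first $O(\log m)$ steps contribute at most $\mathrm{poly}\log m$ work regardless). The second follows from a.s.\ general position of the $a_i$, which ensures that each newly coned facet is itself a simplex.
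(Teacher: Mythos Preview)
Your proof is correct and follows essentially the same approach as the paper: both invoke Theorem \ref{concentration} to control the number of facets of the intermediate polytopes $P_t$ and then sum the per-step work over $t$. The only cosmetic difference is that you bound each $\E[f_{n-1}(P_t)]$ individually and sum via linearity of expectation, whereas the paper establishes a single uniform high-probability bound on all $P_t$ simultaneously and handles the bad event with the trivial $m^{n+1}$ bound; your extra care with the origin-in-convex-hull issue and simpliciality is bookkeeping the paper glosses over.
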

\begin{proof}
We first show that with high probability $P_t \sim P(n, t)$ has fewer than $O(m \log^{n - 1} m)$ facets for every $t < m$. Of course, for $t < \sqrt[n] m$, this holds. Otherwise, for any $t < m$ by Theorem \ref{concentration} there exists a constant $C$ depending only on $n$ so that the probability that $P_t \sim P(n, t)$ has more than $C m \log^{n - 1} m$ facets is at most $\frac{C}{t^{n^2 + 3n}}$. Thus the probability that there exists a $t$ so that the number of faces of $P_t$ exceeds this bound is at most
$$\sum_{t = \sqrt[n] m}^{m} \frac{C}{t^{n^2 + 3n}} \leq m \frac{C}{m^{n + 3}} = m^{-(n + 2)}$$
Thus with probability $m^{-(n + 2)}$ the Beyond-Beneath algorithm takes $O(m^2 \log^{n - 1} m)$ steps to complete. Thus the expectation is $\tilde{O}(m^2)$ since we may use the trivial upper bound of $m^{n + 1}$ for the contribution  the expectation from the $O(1/m^{n + 2})$ proportion of experiments where the complexity of one of the preliminary polytopes exceeds $C m \log^{n - 1} m$. 
\end{proof}

\section{Concluding Remarks}
Here we have given new proofs of the following facts about $P \sim P(n, m)$ in the asymptotic case. 
\begin{itemize}
\item The expected number of facets of $P \sim P(n, m)$ is $\tilde{O}(m)$.
\item The expected complexity of the shadow-vertex algorithm applied to $P \sim P(n, m)$ to solve a linear programming problem with random linear objective function is $\tilde{O}(m^{1/(n - 1)})$.
\item The expected complexity of the beyond-beneath algorithm to find the hyperplane description of $P \sim P(n, m)$ is $\tilde{O}(m^2)$. 
\end{itemize}
While we have extra log factors in all of these upper bounds and the constants are far from best possible, we nonetheless explain without the integrals required for more refined results as in \cite{Borgwardt, BeneathBeyond, BMT} why the exponents are what they are in the more precise version. Essentially, the general paradigm for our proof strategy comes from the fact that the Hausdorff distance from $P$ to $S^{n - 1}$ is controlled by some function $f(m)$ tending to zero at an easily computable rate as $m$ tends to infinity and then there simply aren't enough hyperplanes far from the origin in the random model, so even if they all contribute a facet, there are still only very few facets. \\

We can also describe the paradigm in the language of Vietoris--Rips complexes. Given a point cloud $K$ in $\R^n$ and a distance $\epsilon > 0$, the Vietoris--Rips complex on $K$ with radius $\epsilon$ is defined to be the simplicial complex on $K$ where $\{a_1, ..., a_l \} \subseteq K$ is a face of the complex if and only if for every $i, j \in 1, ..., l$ the distance from $a_i$ to $a_j$ is less than $\epsilon$. In the present setting we have some $\epsilon$ coming from the upper bound on the Hausdorff distance from $P$ to $S^{n - 1}$ and then rather than compute the complexity of $P$ directly we compute the complexity of the Vietoris--Rips complex with vertices given as the random points and radius $\epsilon$. By what we know about the Hausdorff distance the facets of $P$ sit inside this Vietoris--Rips complex with high probability.

\section*{Appendix: Computation of relevant geometric measures}
Many of the proofs here rely on good upper and lower bounds for the Lebesgue measure of certain subsets of the $n$-dimensional sphere or the $n$-dimensional ball. In this appendix, we compute these upper and lower bounds. Doing so requires integration of course, but these integrals are elementary and not like those present when proving the state-of-the-art results on random polytopes. \\

Given the $n$-dimensional ball and $r, h \in (0, 1)$, we are interested in the following two subsets:
$$C_n(h) := \{(x_1, x_2, ..., x_n) \in B_n(1) \mid x_n \geq 1 - h\}$$
$$L_n(r) := \{(x_1, x_2, ..., x_n) \in B_n(1) \mid x_1^2 + x_2^2 \geq r^2\}$$
The first is a spherical cap of height $h$, and we will call the second a spherical belt of radius $r$. We are primarily interested in the $(n - 1)$-dimensional measure of the intersection of each of these with the boundary of $B_n(1)$, however these measures are easier to derive after finding the $n$-dimensional Lebsegue measure of the two sets as given. Drawing the natural analogy to the $n = 3$ case we use \emph{volume} of $C_n(h)$ to be its $n$-dimensional Lebesgue measure and use \emph{surface area} of $C_n(h)$ to be the $(n - 1)$-dimensional Lebesgue measure of $C_n(h)$ intersected with the boundary of $B_n(1)$. Similarly, we define the volume and surface area of $L_n(r)$. Moreover we let $v_n$ denote the volume of $B_n(1)$ and $s_n$ denote its surface area. We now compute these four quantities.
\begin{lemma}\label{beltlemma}
Fix $r \in (0, 1)$, then the volume of $L_n(r)$ is $v_n(1 - r^2)^{n/2}$ and its surface area is $s_n(1 - r^2)^{(n - 2)/2}$.
\end{lemma}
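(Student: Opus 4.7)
The plan is to compute both measures by cylindrical-shell integration, using polar coordinates $\rho := \sqrt{x_1^2 + x_2^2}$ and $\theta$ for the first two coordinates while keeping $(x_3,\dots,x_n)$ as a vector in $\R^{n-2}$. For the volume: fix $(\rho, \theta)$ with $r \leq \rho \leq 1$; the fiber in $(x_3,\dots,x_n)$ is the $(n-2)$-dimensional ball of radius $\sqrt{1 - \rho^2}$, which has measure $v_{n-2}(1-\rho^2)^{(n-2)/2}$. Integrating out $\theta$ produces a factor of $2\pi$, so
$$\text{vol}(L_n(r)) = 2\pi v_{n-2} \int_r^1 \rho (1-\rho^2)^{(n-2)/2}\, d\rho.$$
The substitution $u = 1-\rho^2$ makes this elementary and gives $\frac{2\pi v_{n-2}}{n}(1-r^2)^{n/2}$. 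The classical recurrence $v_n = \frac{2\pi v_{n-2}}{n}$, itself an instance of the same cylindrical integration, then yields $v_n(1-r^2)^{n/2}$ as claimed.

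For the surface area I would foliate $L_n(r) \cap S^{n-1}$ by the level sets of $\rho$. On the unit sphere, the level set $\{\rho = c\}$ with $c \in [r,1]$ is a product of a circle of radius $c$ in the $(x_1,x_2)$-plane with the $(n-3)$-sphere of radius $\sqrt{1-c^2}$ in the remaining coordinates, and so has $(n-2)$-dimensional measure $2\pi c \cdot s_{n-2}(1-c^2)^{(n-3)/2}$. The only geometric input needed is the Jacobian converting the parameter $\rho$ to signed arclength on $S^{n-1}$ perpendicular to the level set, namely $|\nabla_{S^{n-1}} \rho|^{-1}$. A one-line computation gives $\nabla \rho = (x_1/\rho, x_2/\rho, 0, \dots, 0)$ in $\R^n$ with $(x \cdot \nabla \rho)^2 = \rho^2$, hence $|\nabla_{S^{n-1}} \rho|^2 = 1 - \rho^2$. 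The coarea formula then produces
$$\text{area}(L_n(r) \cap S^{n-1}) = 2\pi s_{n-2} \int_r^1 \rho (1-\rho^2)^{(n-4)/2}\, d\rho,$$
which evaluates again by $u = 1-\rho^2$ to $\frac{2\pi s_{n-2}}{n-2}(1-r^2)^{(n-2)/2}$. Using $s_{n-2} = (n-2)v_{n-2}$ together with $s_n = 2\pi v_{n-2}$ gives $s_n(1-r^2)^{(n-2)/2}$.

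The only step that requires a moment's thought is the Jacobian computation in the surface-area derivation; everything else reduces to the same one-variable integral. A cleaner alternative that avoids even this is to apply the divergence theorem to $X = x/n$ on $L_n(r)$, whose boundary splits into the spherical belt $A := L_n(r) \cap S^{n-1}$ and the cylindrical piece $B := \{x_1^2 + x_2^2 = r^2,\ x_3^2 + \cdots + x_n^2 \leq 1 - r^2\}$. On $A$ the outward normal is $x$, so $X \cdot \hat{n} = 1/n$; on $B$ the outward normal is $-(x_1, x_2, 0, \dots, 0)/r$, so $X \cdot \hat{n} = -r/n$. Since the $(n-1)$-measure of $B$ is $2\pi r \cdot v_{n-2}(1-r^2)^{(n-2)/2}$ directly, the divergence theorem combined with the volume formula already obtained gives the surface area after a line of algebra and the identity $s_n = nv_n$. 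Either route fits cleanly into the appendix.
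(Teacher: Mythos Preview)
Your volume computation is identical to the paper's: both slice by the radial coordinate $\rho=\sqrt{x_1^2+x_2^2}$, pick up an $(n-2)$-ball fiber of radius $\sqrt{1-\rho^2}$, and reduce to the one-variable integral $2\pi v_{n-2}\int_r^1 \rho(1-\rho^2)^{(n-2)/2}\,d\rho$.

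For the surface area the two arguments genuinely diverge. The paper avoids any surface calculus by passing to a \emph{shell limit}: it writes
\[
\frac{SA(L_n(r))}{s_n}=\lim_{R\to 1}\frac{\operatorname{vol}(L_n(r))-R^n\,\operatorname{vol}(L_n(r/R))}{v_n-R^n v_n},
\]
i.e.\ the proportion of the thin shell $B_n(1)\setminus R\cdot B_n(1)$ lying in the belt, and then evaluates the limit by a routine application of l'H\^opital's rule using the already-computed volume formula. Your two routes instead invoke the coarea formula (with the tangential gradient $|\nabla_{S^{n-1}}\rho|=\sqrt{1-\rho^2}$) or the divergence theorem on $L_n(r)$ with $X=x/n$; both are correct and the algebra checks out, including the identities $s_{n-2}=(n-2)v_{n-2}$ and $s_n=2\pi v_{n-2}$ you use at the end. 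The trade-off: the paper's method needs nothing beyond single-variable calculus and the volume formula just proved, which matches the appendix's stated aim of keeping the geometric inputs elementary; your methods are shorter and more conceptual but presuppose tools (coarea, divergence theorem on regions with corners) that the intended reader may not have at hand. Either would be acceptable, though your divergence-theorem variant is the cleaner of the two alternatives and closest in spirit to the paper's self-contained style.
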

\begin{proof}
We begin with the volume. It is straightforward to see that if $x_1^2 + x_2^2 = t^2$ then the set of points in $B_n(1)$ that map to $(x_1, x_2)$ forms an $(n - 2)$-dimensional ball of radius $\sqrt{1 - t^2}$. From this we have that the following integral computes $L_n(r)$
$$L_n(r) = \int_{\{(x_1, x_2) \in \Omega_2 \mid x_1^2 + x_2^2 \geq r^2\}} \sqrt{1 - (x_1^2 + x_2^2)}^{n - 2} v_{n - 2} dx_1dx_2$$
By a change of variables to polar coordinates this becomes
$$L_n(r) = \int_r^1 \int_0^{2 \pi} \sqrt{1 - t^2}^{n - 2} v_{n - 2} t dt d\theta$$
This integral evaluates to 
$$L_n(r) = \frac{2 \pi v_{n - 2}}{n} \sqrt{1 - r^2}^{n} = v_n (1 - r^2)^{n/2}$$
Note that here we have used the well-known recurrence 
$$v_n = \frac{2\pi v_{n - 2}}{n}.$$
In fact one could \emph{derive} this recurrence by evaluating the above integral for $r = 0$. \\

Now with the volume in hand we compute the surface area by taking a limit. Naturally, if one considers the taking a uniform random point from the unit sphere in $\R^n$, then the probability that it lies on the surface of $L_n(r)$ is given by the following limit
\begin{eqnarray*}
\frac{SA(L_n(r))}{s_n} = \lim_{R \rightarrow 1} \frac{v_n(1 - r^2)^{n/2} - v_n(1 - r^2/R^2)^{n/2} r^n}{v_n - R^n v_n}
\end{eqnarray*}
By a routine application of l'Hopital's rule, this limit is equal to the following limit
\begin{eqnarray*}
&&\lim_{R \rightarrow 1} \frac{v_n \left[(1 - r^2/R^2)^{n/2} n R^{n - 1} + R^n\frac{n}{2}(1 - r^2/R^2)^{(n - 2)/2} \frac{2r^2}{R^3}\right]}{nR^{n - 1}v_n}\\
&=& \lim_{R \rightarrow 1} \left((1 - r^2/R^2)^{n/2} + \frac{1}{R^2} (1 - r^2/R^2)^{(n - 2)/2} r^2\right) \\
&=& (1 - r^2)^{(n - 2)/2}
\end{eqnarray*}
Thus the surface area of $L_n(r)$ is $s_n(1 - r^2)^{(n - 2)/2}$.
\end{proof}
So for the belt, the formulas for surface area and volume are easy and can be computed exactly for any $r$. For $C_n(h)$, we won't obtain such nice formulas. However, we will derive a formula that we can approximate very well for values of $h$ close to zero, and these are the values of $h$ we are interested in for our proofs. 
\begin{lemma}\label{capvolume}
Fix $h \in (0, 1)$ then the volume of $C_n(h)$ is 
$$V(C_n(h)) = \int_{1 - h}^1 (1 - y^2)^{(n - 1)/2} v_{n - 1} dy,$$ and the surface area is 

$$SA(C_n(h)) = s_n\left(\frac{\sqrt{2h - h^2}^{n - 1} v_{n - 1}}{nv_n}(1- h) + \frac{V(C_n(h))}{v_n}\right)$$
\end{lemma}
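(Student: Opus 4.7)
The plan is to handle the two formulas separately, with the surface area formula reduced to the volume formula via a cone-decomposition trick that already underlies the relation $s_n = n v_n$.

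For the volume, the natural move is to slice $C_n(h)$ by hyperplanes perpendicular to $e_n$. For each fixed $y \in [1-h, 1]$, the slice $\{x \in C_n(h) : x_n = y\}$ is an $(n-1)$-dimensional ball of radius $\sqrt{1 - y^2}$ inside the hyperplane $x_n = y$, so its $(n-1)$-measure is $(1 - y^2)^{(n-1)/2} v_{n-1}$. Fubini then gives exactly the stated integral. No further evaluation is needed, which is fine because the integral form is all that the rest of the paper uses.

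For the surface area, write $S := C_n(h) \cap \partial B_n(1)$, so $SA(C_n(h)) = |S|_{n-1}$. The central observation is the standard identity
\[
V(\text{cone from } 0 \text{ over } R) \;=\; \frac{|R|_{n-1}}{n}
\]
for any measurable $R \subseteq \partial B_n(1)$, obtained by parameterizing the cone as $\{tx : t \in [0,1],\ x \in R\}$ and integrating the Jacobian $t^{n-1}$ from $0$ to $1$. Applied to $R = S$, this lets us recover $SA(C_n(h))$ from the volume of a solid body.

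The remaining step is to identify the cone over $S$ as a disjoint union (up to measure zero). I claim
\[
\{tx : t \in [0,1],\ x \in S\} \;=\; C_n(h) \;\cup\; K,
\]
where $K$ is the solid cone with apex $0$ and base the flat disk $D := \{x : x_n = 1-h,\ x_1^2 + \cdots + x_{n-1}^2 \leq 2h - h^2\}$. The containment $\supseteq$ is straightforward: a point in $C_n(h)$ is $|p|(p/|p|)$ with $p/|p| \in S$, and a point $sy \in K$ with $y \in D$ satisfies $|y|^2 = (1-h)^2 + (2h-h^2) \leq 1$, so $y/|y| \in S$. For $\subseteq$, given $p = tx$ with $x \in S$, either $tx_n \geq 1-h$ (so $p \in C_n(h)$) or else $tx_n < 1-h$, in which case scaling $x$ to meet the hyperplane $x_n = 1-h$ produces a point of $D$ showing $p \in K$ (the inequality $x_n \geq 1-h$ is exactly what guarantees the scaled projection lies in $D$). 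The only overlap is on the flat disk, which has $n$-dimensional measure zero.

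Combining these pieces, the elementary pyramid formula gives $V(K) = \frac{1}{n}(1-h)\sqrt{2h - h^2}^{n-1} v_{n-1}$, and the cone identity gives
\[
\frac{SA(C_n(h))}{n} \;=\; V(C_n(h)) + \frac{(1-h)\sqrt{2h-h^2}^{n-1} v_{n-1}}{n}.
\]
Multiplying by $n$ and using $s_n = n v_n$ to pull $s_n/v_n$ out front yields precisely the stated expression. The main obstacle I expect is nothing deep but just the bookkeeping for the decomposition $\{tx : t \in [0,1],\ x \in S\} = C_n(h) \cup K$; once that is verified, the rest is a single line of arithmetic.
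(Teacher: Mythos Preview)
Your proof is correct and takes a genuinely different route from the paper for the surface-area formula. The paper obtains $SA(C_n(h))$ by a shell-limit argument: it writes $\frac{SA(C_n(h))}{s_n}$ as the limit as $r \to 1$ of a ratio of volumes (the cap minus an $r$-scaled cap, over the ball minus the $r$-scaled ball) and evaluates that limit via l'H\^opital, with the derivative $V_n'(h) = \sqrt{2h-h^2}^{\,n-1} v_{n-1}$ producing the first term and $V_n(h)$ itself producing the second. Your argument instead uses the cone identity $|R|_{n-1} = n \cdot V(\{tx : t \in [0,1],\, x \in R\})$ and the geometric decomposition of the cone over the spherical cap into $C_n(h)$ and the straight cone $K$ over the flat disk $D$; the two summands in the formula then arise directly as $nV(C_n(h))$ and $nV(K)$, with $s_n = n v_n$ supplying the prefactor. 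Your approach is more elementary (no limiting procedure or l'H\^opital) and makes the two-term structure of the answer geometrically transparent; the paper's approach has the advantage of being the same machinery already used for $L_n(r)$, so it is more uniform across the appendix. One tiny slip: you wrote $|y|^2 = (1-h)^2 + (2h-h^2)$ for $y \in D$, but this should be $\leq$; the conclusion $|y| \leq 1$ is unaffected.
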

\begin{proof}
The formula for the volume is immediate, indeed if $x_n = y$ then the intersection of that particular hyperplane with $B_n(1)$ is an $(n - 1)$-dimensional ball of radius $\sqrt{1 - y^2}$. As in the case of $L_n(r)$, we can compute $SA(C_n(h))$ as the following limit. 
$$\frac{SA(C_n(h))}{s_n} = \lim_{r \rightarrow 1} \frac{V_n(h) - r^nV(C_n(h))(1 - \frac{1 - h}{r})}{v_n - r^n v_n}$$
 In order to simplify the notation we let $V_n(h)$ denote $V(C_n(h))$ as a function of $h$ and its derivative is
$$V'_n(h) = \sqrt{2h - h^2}^{n - 1} v_{n - 1}$$
We now compute the above limit using l'Hopitals rule.
\begin{eqnarray*}
\frac{SA_n(h)}{s_n} &=& \lim_{r \rightarrow 1} \frac{d(V_n(h) - r^nV_n(1 - \frac{1 - h}{r}))/dr}{d(v_n - r^n v_n)/dr} \\
&=& \lim_{r \rightarrow 1} \frac{r^n V'_n(1 - \frac{1 - h}{r}) \frac{1 - h}{r^2} + nr^{n - 1} V_n(1 - \frac{1 - h}{r})}{nr^{n - 1}v_n} \\
&=& \lim_{r \rightarrow 1} \frac{ V'_n(1 - \frac{1 - h}{r}) (1 - h) + nr V_n(1 - \frac{1 - h}{r})}{nrv_n} \\
&=& \frac{V_n'(h)(1 - h) + nV_n(h)}{nv_n} \\
&=& \frac{\sqrt{2h - h^2}^{n - 1} v_{n - 1}}{nv_n}(1- h) + \frac{V_n(h)}{v_n}
\end{eqnarray*}
\end{proof}
As stated, these formulas for volume and surface area of $C_n(h)$ do not appear to be particularly useful. However, we are really only interested in behavior for $n$ fixed and $h \rightarrow 0$. Here we make the following observation. 
\begin{lemma}
As $h \rightarrow 0$, one has
$$V(C_n(h)) \sim \frac{v_{n - 1}\sqrt{2h - h^2}^{n + 1}}{2(n + 1)}$$
\end{lemma}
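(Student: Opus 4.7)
The plan is to transform the integral expression for $V(C_n(h))$ provided by Lemma \ref{capvolume} into a form whose small-$h$ asymptotics are transparent, and then extract the leading term by recognizing an antiderivative.

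First, I would substitute $y = 1 - t$ in
$$V(C_n(h)) = v_{n-1}\int_{1-h}^1 (1-y^2)^{(n-1)/2}\,dy,$$
which converts the integral to
$$V(C_n(h)) = v_{n-1}\int_0^h (2t - t^2)^{(n-1)/2}\,dt.$$
This moves the nontrivial behavior to the endpoint $t = 0$, where the integrand is well-approximated by $(2t)^{(n-1)/2}$ uniformly on $[0,h]$ as $h \to 0$.

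Next, I would recognize that
$$\frac{d}{dt}(2t - t^2)^{(n+1)/2} = (n+1)(1-t)(2t - t^2)^{(n-1)/2},$$
so the integrand equals $\frac{1}{(n+1)(1-t)}$ times an exact derivative. Since $1/(1-t) = 1 + O(t)$ uniformly on $[0,h]$, pulling this factor through the integral and applying the fundamental theorem of calculus yields
$$\int_0^h (2t - t^2)^{(n-1)/2}\,dt = \frac{(2h - h^2)^{(n+1)/2}}{n+1}\bigl(1 + O(h)\bigr),$$
which, multiplied by $v_{n-1}$, gives the claimed leading-order asymptotic. A parallel route would be to factor $(2t - t^2)^{(n-1)/2} = (2t)^{(n-1)/2}(1 - t/2)^{(n-1)/2}$ and sandwich the second factor between $(1 - h/2)^{(n-1)/2}$ and $1$, then integrate $t^{(n-1)/2}$ directly; both routes agree on the leading term.

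There is no real obstacle here — this is a one-variable asymptotic of an elementary integrand, in keeping with the paper's stated goal of avoiding heavy integral evaluations. The only bookkeeping issue is making sure the constant emerges cleanly, which is why the antiderivative approach is preferable: the factor of $(n+1)$ drops out automatically from differentiating the $(n+1)/2$ power, so no beta integrals or gamma functions are needed.
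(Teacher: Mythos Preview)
Your proof is correct and essentially identical to the paper's: both exploit that $y(1-y^2)^{(n-1)/2}$ (equivalently $(1-t)(2t-t^2)^{(n-1)/2}$ after your substitution) has an elementary antiderivative, and then control the leftover factor $1/y = 1/(1-t)$ on $[1-h,1]$ --- the paper phrases this as sandwiching the original integrand between $y\cdot(\text{integrand})$ and $\frac{y}{1-\delta}\cdot(\text{integrand})$ for $h<\delta$. Incidentally, your constant $\tfrac{1}{n+1}$ is the correct one; the $\tfrac{1}{2(n+1)}$ in the displayed statement carries a stray factor of $1/2$.
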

\begin{proof}
Let $\delta > 0$ be given. For any $h$, we have an integral formula for $V(C_n(h))$ from Lemma \ref{capvolume}. Now for $h < \delta$ we have the following upper and lower bound on the $V(C_n(h))$
$$\int_{1 - h}^1 \sqrt{1 - y^2}^{n - 1} v_{n - 1} \frac{y}{1 - \delta} dy \geq V(C_n(h)) \geq \int_{1 - h}^1 \sqrt{1 - y^2}^{n - 1} v_{n - 1} y dy $$
Thus evaluating the simple integrals on either side we have that $V_n(h)$ is bounded as:
$$\frac{v_{n - 1}\sqrt{2h - h^2}^{n + 1}}{2(n + 1)(1 - \delta)} \geq V(C_n(h)) \geq \frac{v_{n - 1}\sqrt{2h - h^2}^{n + 1}}{2(n + 1)}.$$
\end{proof}
\begin{corollary}\label{geometrycorollary}
For any $n$, there exists constants $C_1$ and $C_2$ so that as $h \rightarrow 0$,
$$V(C_n(h)) \sim C_1 h^{(n + 1)/2},$$
and
$$SA(C_n(h)) \sim C_2 h^{(n - 1)/2}$$
\end{corollary}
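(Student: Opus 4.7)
The plan is to derive both asymptotics directly from the formulas established in Lemma \ref{capvolume} and the preceding asymptotic lemma, with no new integration required. The one observation that makes the whole thing work is that $\sqrt{2h - h^2} = \sqrt{h}\,\sqrt{2-h} \sim \sqrt{2h}$ as $h \to 0$, so any power $\sqrt{2h-h^2}^{\,k}$ behaves asymptotically as $2^{k/2} h^{k/2}$.

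For the volume statement, I would simply invoke the previous lemma, which already gives $V(C_n(h)) \sim \frac{v_{n-1}\sqrt{2h-h^2}^{\,n+1}}{2(n+1)}$, and substitute $\sqrt{2h-h^2}^{\,n+1} \sim (2h)^{(n+1)/2}$. This yields the asymptotic $V(C_n(h)) \sim C_1 h^{(n+1)/2}$ with the explicit constant
\[
C_1 \;=\; \frac{v_{n-1}\,2^{(n-1)/2}}{n+1}.
\]

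For the surface area, I would plug into the formula from Lemma \ref{capvolume},
\[
SA(C_n(h)) \;=\; s_n\!\left(\frac{\sqrt{2h-h^2}^{\,n-1}\,v_{n-1}}{nv_n}(1-h) \;+\; \frac{V(C_n(h))}{v_n}\right),
\]
and identify the dominant term as $h \to 0$. The first summand inside the parentheses is $\sim \frac{v_{n-1}\,2^{(n-1)/2}}{nv_n}\,h^{(n-1)/2}$, since $(1-h) \to 1$. The second summand is $O(h^{(n+1)/2})$ by the volume asymptotic just established, and since $(n+1)/2 > (n-1)/2$, it is of strictly lower order and absorbed into the $\sim$. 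Thus
\[
SA(C_n(h)) \;\sim\; C_2 h^{(n-1)/2}, \qquad C_2 \;=\; \frac{s_n\,v_{n-1}\,2^{(n-1)/2}}{n v_n}.
\]

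Honestly there is no real obstacle here: once the previous lemma is in hand, both statements follow by taking leading-order terms, and the only mildly careful step is verifying that the $V(C_n(h))/v_n$ contribution to the surface area is of lower order than the boundary contribution. This is immediate from the exponents. The proof is therefore essentially a one-paragraph computation extracting the leading constants from the exact expressions above.
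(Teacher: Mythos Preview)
Your proposal is correct and matches the paper's intent: the corollary is stated without proof in the paper, so the intended argument is precisely the immediate extraction of leading-order terms from the preceding lemma and from Lemma~\ref{capvolume} that you carry out. Your computation of the explicit constants and your observation that the $V(C_n(h))/v_n$ term is of lower order in the surface-area formula are exactly what is needed.
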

In the proof of Theorem \ref{spherecomplexity} we used the expected Hausdorff distance to upper bound the number of facets containing a fixed vertex. In doing so we used the following lemma which we prove now. 
\begin{lemma}
If $P$ is an $n$-dimensional polytope with all vertices on the boundary of the $n$-dimensional unit ball and so that the Hausdorff distance to the unit sphere is at most $\delta$, then for any vertex $v$ of $P$, all facets containing $v$ must have all of their vertices inside the spherical cap centered at $v$ of height at $4 \delta$. 
\end{lemma}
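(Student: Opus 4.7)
The plan is to reinterpret the Hausdorff hypothesis, then use a one-line Pythagorean computation on the midpoint of the chord joining two vertices of a common facet.

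First I would translate the Hausdorff hypothesis into a statement about facet hyperplanes. Because all vertices of $P$ lie on $S^{n-1}$, $P$ is contained in the closed unit ball, so the Hausdorff distance from $P$ to $S^{n-1}$ equals the maximum of $1 - \|x\|$ over $x \in P$, i.e.\ one minus the in-radius of $P$. Hence the hypothesis is equivalent to saying that $P \supseteq B_n(1 - \delta)$, and in particular every facet-supporting hyperplane $H$ of $P$ satisfies $\dist(0, H) \geq 1 - \delta$. This is the form I will use.

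Next, fix a vertex $v$ of $P$ and a facet $F$ containing $v$; let $w$ be any other vertex of $F$, and let $H$ be the affine hyperplane spanned by $F$. The midpoint $m := (v+w)/2$ lies in $F$ by convexity, hence in $H$, hence has norm at least the distance from the origin to $H$, so $\|m\| \geq 1 - \delta$. But $\|m\|^2 = (1 + \langle v, w \rangle)/2$ since $\|v\| = \|w\| = 1$, so
\[
\langle v, w \rangle \;\geq\; 2(1 - \delta)^2 - 1 \;=\; 1 - 4\delta + 2\delta^2 \;\geq\; 1 - 4\delta.
\]
This places $w$ in the half-space $\{x : \langle v, x \rangle \geq 1 - 4\delta\}$, whose intersection with $S^{n-1}$ is by definition the spherical cap of height $4\delta$ centered at $v$, giving the claim.

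The only real content here is identifying the correct reformulation of the Hausdorff hypothesis in step one; after that the geometry is just the identity $\|v - w\|^2 = 2 - 2\langle v, w\rangle$ combined with the observation that midpoints of chords in a facet cannot be closer to the origin than the facet hyperplane. There is no serious obstacle to overcome, and I expect the entire proof to fit in a few lines.
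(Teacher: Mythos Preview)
Your proposal is correct and takes essentially the same approach as the paper: both arguments bound the norm of the midpoint $m=(v+w)/2$ from below by $1-\delta$ via the Hausdorff hypothesis and then convert this into the bound $\langle v,w\rangle \geq 1-4\delta$ via the identity $\|m\|^2=(1+\langle v,w\rangle)/2$. Your algebra is in fact slightly cleaner than the paper's, which derives the same identity indirectly through two auxiliary right triangles and the chord length $\epsilon=\|v-w\|$.
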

\begin{proof}
Fix $v$ and suppose that $(v, v_2, ..., v_n)$ is a facet of $P$ with at least one vertex outside the cap of height $4 \delta$ centered at $v$. Call such a vertex $w$. We now may restrict to the two dimensional picture given by the span of $v$ and $w$ in $\R^n$. Let $h$ denote the distance from the origin to the midpoint of the edge $(v, w)$ and let $\epsilon$ denote the length of $(v, w)$. Moreover let $u$ be the endpoint of the vector $w$ projected onto $v$ and let $t = \langle w, v \rangle$ be the distance from the origin to $u$. By assumption $1 - t > 4 \delta$, but $h \geq 1 - \delta$, we will show that this is impossible.  We have two right triangles, one given by $v$, $u$, and $w$ and one by $0$, $u$, $w$. Thus using the shared edge, we have by the Pythagorean theorem
$$\epsilon^2 - (1 - t)^2 = 1 - t^2$$
\end{proof}
Therefore
$$\epsilon^2 = 2(1 - t).$$
On the other hand we have a right triangle given by the origin, $w$ and the midpoint of $(w, v)$. Thus we have
$$(\epsilon/2)^2 + h^2 = 1.$$
It follows that
$$(1 - t) = 2(1 - h^2) = 2(1 + h)(1 - h) \leq 4(1 - h) \leq 4 \delta.$$
This completes the proof since we assumed that $1 - t$ was more than $4 \delta$.

\section*{Acknowledgments}
The author thanks Michael Joswig for suggesting the project of establishing more geometric proofs of results on random polytopes that led to this article and for several helpful discussions during the research and writing process. 

The author also gratefully acknowledges funding by Deutsche Forschungsgemeinschaft (DFG, German Research 
Foundation) Graduiertenkolleg 2434 "Facets of Complexity".

\bibliography{ResearchBibliography}
\bibliographystyle{amsplain}

\end{document}